\newcommand {\diam }{\mathrm{diam}}
\newcommand {\Isom}{\mathrm{Isom}}
\newcommand {\vol }{\mathrm{vol}}
\newcommand {\R}{\mathbb{R}}
\newcommand {\grad }{\triangledown}
\newcommand {\Ric}{\mathrm{Ric}}
\newcommand {\spa}[1]{\langle{#1}\rangle}
\newcommand {\myarrow}[1]{\mathop{\longrightarrow}\limits^{#1}}
\newcommand{\GH}{\myarrow{GH}}
\newcommand{\XXint}[3]{{
		\setbox0=\hbox{$#1{#2#3}{\int}$}
		\vcenter{\hbox{$#2#3$}}\kern-.5\wd0}}
\numberwithin{equation}{section}
\newtheorem{proposition}{Proposition}
\newtheorem{lemma}[proposition]{Lemma}
\newtheorem{sublemma}[proposition]{Sublemma}
\newtheorem{theorem}[proposition]{Theorem}
\newtheorem{corollary}[proposition]{Corollary}
\numberwithin{proposition}{section}
\newtheorem{problem}[proposition]{Problem}
\theoremstyle{definition}
\newtheorem{definition}[proposition]{Definition}
\newtheorem{remark}[proposition]{Remark}
\title[Finite generation of fundamental groups]{Finite generation of fundamental groups for manifolds with nonnegative Ricci curvature whose universal cover is almost $k$-polar at infinity}
\author{HongZhi Huang}
\address{ Department of Mathematics \\  Jinan University\\ Guangzhou 510632\\ PR China}
\email{\href{mailto:huanghz@jnu.edu.cn
	}{huanghz@jnu.edu.cn,
}{\href{mailto:hyyqsaax@163.com
}{hyyqsaax@163.com
}}}
\begin{document}

\maketitle
\begin{abstract}

In this article, we prove that the fundamental group $\pi_1(M)$ of a complete open manifold $M$ with nonnegative Ricci curvature is finitely generated, under the condition that the Riemannian universal cover $\tilde M$ satisfies an "almost $k$-polar at infinity" condition. Additionally, such $\pi_1(M)$ is virtually abelian. Furthermore, we demonstrate that the base point of any tangent cone at infinity of such a manifold is nearly a pole. In the case where $\tilde M$ exhibits almost maximal Euclidean volume growth, we prove that $M$ deformation retracts to a closed submanifold $F$ which is diffeomorphic to a flat manifold, provided $M$ is not simply connected.

\vspace*{5pt}
\noindent {\it 2010 Mathematics Subject Classification}: 53C20, 53C21, 53C23.

\vspace*{5pt}
\noindent{\it Keywords}: Nonnegative Ricci curvature, the Milnor conjecture, fundamental groups.

\end{abstract}

\section{introduction}

The Milnor conjecture proposes that the fundamental group of a complete open manifold $M$ with nonnegative Ricci curvature is finitely generated \cite{Mi68}. This conjecture remained unresolved for decades until recently Brue-Naber-Semola provided counterexamples in dimensions $n\ge 6$ \cite{BAD23a,BAD23b}, thereby disproving the conjecture. Nevertheless, the search for sufficient conditions ensuring the finite generation of the fundamental group in such manifolds remains of interest. Efforts in this direction have led to several important discoveries. For instance, manifolds $M$ exhibiting Euclidean volume growth have a finite fundamental group \cite{Li86}; Other conditions supporting finite generation include when the volume of the $n$-dimensional manifold $M$ exhibits polynomial growth of order $\ge(n-k)$ and the first Betti number $\ge k$ \cite{An90}, or when the base point of every tangent cone at infinity of $M$ is nearly a pole \cite{Sor99}. 

Another strategy to establish the finitely generated nature of the fundamental group is to impose conditions on the Riemannian universal cover $\tilde{M}$ rather than $M$, which brings new challenges. For example, it is still unclear whether the Euclidean volume growth of $\tilde{M}$ leads to the finite generation of the fundamental group, in contrast to the aforementioned result in \cite{Li86}. In this pursuit, Pan has confirmed the Milnor conjecture under specific geometric stability conditions applied to the universal cover at infinity, establishing the following results:

\begin{theorem}\cite{Pan19a,Pan19b}\label{PansResult}
	Let $M$ be a complete open manifold  with nonnegative Ricci curvature. If the Riemannian universal cover $\tilde M$ satisfies one of the following conditions, then $\pi_1(M)$ is finitely generated,
	\begin{enumerate}
		\item (\cite[Theorem A]{Pan19a}) Any tangent cone at infinity of $\tilde M$ is a metric cone, whose maximal Euclidean factor has dimension $k\ge 0$.
		
		\item (\cite[Theorem B (1)]{Pan19b}) There exists a compact metric space $K$, such that any tangent cone at infinity of $\tilde M$ is a metric cone, whose cross-section is $\epsilon_K$-close to $K$ with respect to the Gromov-Hausdorff distance, where $\epsilon_K>0$ is a universal constant only depending on $K$.
	\end{enumerate}   
\end{theorem}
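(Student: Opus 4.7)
The plan is to argue by contradiction, following the strategy pioneered by Sormani and refined by Pan. Suppose $\pi_1(M)$ is not finitely generated. Lift a base point $p \in M$ to $\tilde p \in \tilde M$, and apply Gromov's short generator procedure to the deck transformation action of $\pi_1(M)$ on $\tilde M$: this yields an infinite sequence $\{\gamma_i\} \subset \pi_1(M)$ with translation lengths $L_i := d(\tilde p, \gamma_i \tilde p)$ nondecreasing and tending to infinity, such that no $\gamma_i$ lies in $\langle \gamma_1, \ldots, \gamma_{i-1} \rangle$. The goal is to derive a contradiction by extracting equivariant Gromov-Hausdorff limits at the scales $L_i$ and exploiting the cone-type rigidity of the tangent cones at infinity of $\tilde M$.

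The main tool is equivariant Gromov-Hausdorff convergence \emph{a la} Fukaya-Yamaguchi, applied to the rescaled spaces $(\tilde M, L_i^{-1} d, \tilde p)$ with the $\pi_1(M)$-action. Along a subsequence I obtain a limit triple $(Y, y_\infty, G)$, where $Y$ is a tangent cone at infinity of $\tilde M$ and $G \le \Isom(Y)$ is a closed (Lie, by Colding-Naber) subgroup. By hypothesis $Y$ is a metric cone, and splits as $\R^k \times C(Z')$ where in case (1) the Euclidean factor is maximal of dimension $k$, and in case (2) the cross-section is $\epsilon_K$-close to the fixed compact space $K$. The short generator selection ensures that each $\gamma_i$ moves $\tilde p$ by exactly $L_i$ while no shorter combination does the same, so the limit orbit $G \cdot y_\infty$ still contains a nontrivial element at unit distance, and $G$ is nondiscrete.

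The heart of the argument is a \emph{critical rescaling} between consecutive scales. One varies $r \in [L_i, L_{i+1}]$ and tracks how the isomorphism type of the limit pair $(Y_r, G_r)$ evolves; a critical scale is one at which this type jumps. In case (1), the maximal Euclidean factor dimension $k$ is a discrete invariant that should be preserved along the family, but at a critical scale the new short generator $\gamma_{i+1}$ forces $G_r$ to acquire a translation direction transverse to the existing $\R^k$; via the Cheeger-Colding splitting theorem, this produces a Euclidean factor of dimension at least $k+1$ in some tangent cone, contradicting the maximality hypothesis. In case (2), the analogous step shows that the critical rescaling must deform the cross-section by more than $\epsilon_K$ in Gromov-Hausdorff distance, contradicting the $\epsilon_K$-closeness to $K$.

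The main obstacle I anticipate is executing the critical rescaling with sufficient precision. One needs (i) to verify that the short generator at the next scale genuinely contributes a new geometric direction rather than a nearly-trivial perturbation, which is where Sormani's ``halfway lemma'' and the uniform doubling from Bishop-Gromov volume comparison come in; (ii) to promote the approximate splitting behavior of orbits into an actual splitting of the limit, via the almost splitting theorem and $\epsilon$-regularity for Ricci limits; and (iii) to extract the right universal constant $\epsilon_K$ in case (2), which encodes a quantitative gap between the cross-section and any nontrivial deformation induced by a new isometric action. Once the critical-scale contradiction is in place, it forces the short generator sequence to terminate, and hence $\pi_1(M)$ is finitely generated.
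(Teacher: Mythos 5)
Your proposal is an outline of the right general strategy (contradiction via Gromov short generators and equivariant Gromov--Hausdorff limits at the generator scales), but the step that actually carries the mathematical content is missing: you assert that at a critical scale ``the new short generator $\gamma_{i+1}$ forces $G_r$ to acquire a translation direction transverse to the existing $\R^k$,'' and in case (2) that the rescaling ``must deform the cross-section by more than $\epsilon_K$,'' but no mechanism is given for either claim --- and as stated the first one cannot be literally correct, since the tangent cones at infinity of $\tilde M$ are fixed by hypothesis and the deck group does not add Euclidean factors to them. What the short generator selection actually gives you is a gap in the orbit: every element outside the subgroup generated by the earlier generators moves $\tilde p$ by at least $L_i$, so after rescaling by $L_i^{-1}$ the limit orbit $G(y_\infty)$ is \emph{disconnected} (this is Lemma \ref{Nonconnectness} in the paper). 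The whole difficulty is to turn this disconnectedness, together with the cone (or almost-polar) structure upstairs, into a contradiction. In the paper this is done downstairs, not upstairs: Lemma \ref{PolarnessLemma} shows that for a closed abelian (or nilpotent) isometry group of $\R^k\times X$ with $X$ lineless and pole-fixing, if the tangent cone of the quotient at the base point had no line then the orbit of the base point would be a connected $\R^s$; hence the disconnected orbit forces every tangent cone of the quotient $Y_s$ at $y_s^*$ to split an extra $\R$-factor. One also needs Wilking's reduction (Theorem \ref{WilkingsReduction}) to make the limit group abelian before this lemma applies, and the transformation theorem for splitting maps (Theorem \ref{TransThm}, via Lemma \ref{StablityLemma}) to propagate that extra factor from the finer scale $L_i$ up to the coarser scale $L_{i+1}$; iterating $k+1$ times gives the impossible conclusion that the quotient splits $\R^{k+1}$. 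None of your items (i)--(iii), which you explicitly defer, supplies a substitute for these steps, so the proposal as written has a genuine gap precisely at the heart of the theorem.

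It is also worth noting that the paper does not prove Theorem \ref{PansResult} by Pan's original critical-rescaling scheme at all: it deduces it from Theorem \ref{MainThm} (equivalently Corollary \ref{InfinityConeVersion}), after observing that both cone hypotheses (1) and (2) imply the $(\delta,\eta,k)$-polar-at-infinity condition for suitable $\eta$ and all small $\delta$. If you wish to pursue your critical-rescaling outline instead, you would essentially be reconstructing Pan's arguments in \cite{Pan19a,Pan19b}, which require substantially more than Sormani's halfway lemma and Bishop--Gromov doubling; the transformation-theorem route of this paper is the shorter path, but it still hinges on the disconnected-orbit lemma and the polarness lemma that your sketch does not address.
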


The above theorem covers many interesting cases, including when $M$ satisfies nonnegative sectional curvature, or when $\tilde M$ exhibits almost maximal volume growth, or when $\tilde M$ has both Euclidean volume growth and a unique tangent cone at infinity. 

In this article, our main theorem affirms the Milnor conjecture under a geometric stability assumption on $\tilde M$, called almost $k$-polar at infinity, which unifies and relaxes the aforementioned two conditions proposed by Pan. Additionally, our approach differs from previous ones and appears to be more concise, largely due to the transformation theorem for Cheeger-Colding's almost splitting maps (\cite{CJN21,HH22}, as seen in Theorem \ref{TransThm} below).

Before presenting our result, we elucidate some concepts. For a manifold $M$ and $p\in M$, we say that an $r$-geodesic ball $B_r(p)$ is $(\epsilon,k)$-Euclidean for some small $\epsilon>0$ and integer $k\ge0$, if there exists a pointed metric product $(\R^k\times X,(0^k,x^*))$, where $X$ is a metric space, such that the Gromov-Hausdorff distance $d_{GH}(B_r(p),B_r((0^k,x^*)))\le\epsilon r$. Given small constants $\delta,\eta>0$ and integer $k\ge 0$, we define the meaning of $(\delta,\eta,k)$-polar at infinity as follows.

\begin{definition}\label{DefAlmostk-Polar} 
	If there exist $p\in M$ and $R>0$, such that for any $r\ge R$, the following conditions hold,
	\begin{enumerate}
		\item 	$B_r( p)$ is $(\delta,k)$-Euclidean and not $(\eta,k+1)$-Euclidean,
		
		\item  for any $x\in\partial B_r(p)$, $d(x,\partial B_{2r}(p))\le(1+\delta )r$,
	\end{enumerate}
	then we say that $M$ satisfies $(\delta,\eta,k)$-polar at infinity (with respect to $p$).
\end{definition}

Note that, in either case (1) or (2) of Theorem \ref{PansResult}, $\tilde M$ satisfies the $(\delta, \eta, k)$-polar condition at infinity for suitable values of $\delta$, $\eta$, and $k$. Specifically, in case (1), $\eta$ depends solely on $n$, and $\delta$ can be arbitrarily small, whereas in case (2), both $k$ and $\eta$ depend on $K$, with $\delta$ set at $2\epsilon_K$.

Our main result is stated below.

\begin{theorem}\label{MainThm}
	For any integer $n\ge 2$ and small $\eta>0$, there exists a constant $\delta=\delta(n,\eta)$ only depending on $n,\eta$, to the following effect. Let $M$ be a complete open $n$-manifold with nonnegative Ricci curvature. If the Riemannian universal cover $\tilde M$ satisfies $(\delta,\eta,k)$-polar at infinity for some $k\in[0,n]$, then $\pi_1(M)$ is finitely generated.

\end{theorem}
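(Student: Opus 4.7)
The plan is to argue by contradiction. Assume $\pi_1(M,p)$ is not finitely generated. Fix a lift $\tilde p\in\tilde M$ and form Gromov's short generating system $\{\gamma_i\}_{i\ge 1}\subset\pi_1(M,p)$: choose $\gamma_i$ inductively as an element of smallest displacement $\ell_i=d(\tilde p,\gamma_i\tilde p)$ not lying in $\langle\gamma_1,\ldots,\gamma_{i-1}\rangle$. Then $\{\ell_i\}$ is non-decreasing, $\ell_i\to\infty$, and the short generator inequality $d(\tilde p,\gamma_i g\tilde p)\ge\ell_i$ holds for every $g\in\langle\gamma_1,\ldots,\gamma_{i-1}\rangle$. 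I now want to scale out and derive a structural contradiction from the polar hypothesis.

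Rescale the metric by $\ell_i^{-1}$. By Bishop-Gromov volume comparison and Fukaya-Yamaguchi equivariant precompactness, a subsequence of $(\tilde M,\ell_i^{-1}\tilde d,\tilde p,\pi_1(M);\gamma_i)$ converges in the pointed equivariant Gromov-Hausdorff sense to $(Y,y_\infty,G;\bar\gamma)$ with $d_Y(y_\infty,\bar\gamma y_\infty)=1$. For $\delta=\delta(n,\eta)$ small enough, Definition \ref{DefAlmostk-Polar}(1) combined with Theorem \ref{TransThm} guarantees that the Cheeger-Colding almost $k$-splitting directions at all dyadic scales in $[R,\ell_i]$ differ by controlled rotations and can be aligned; in the limit, $(Y,y_\infty)$ is therefore a \emph{genuine} isometric product $(\mathbb{R}^k\times Z,(0,z^*))$, and $Z$ contains no Euclidean line through $z^*$ (otherwise some intermediate scale would be $(\eta,k+1)$-Euclidean, contradicting Definition \ref{DefAlmostk-Polar}(1)). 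Definition \ref{DefAlmostk-Polar}(2) passes to the limit as the scale-invariant statement that every point of $\partial B_s(y_\infty)$ lies on a geodesic ray from $y_\infty$, so $Y$ is a metric cone with apex $y_\infty$, and $Z=C(\Sigma)$ for some compact $\Sigma$ with $\mathrm{diam}(\Sigma)<\pi$.

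Because $\mathrm{diam}(\Sigma)<\pi$, the factorization $Y=\mathbb{R}^k\times C(\Sigma)$ is canonical: every isometry of $Y$ preserves both factors and fixes the apex of $C(\Sigma)$. Hence the orbit $G\cdot y_\infty$ lies inside $\mathbb{R}^k\times\{z^*\}$ and the induced action factors through $\Isom(\mathbb{R}^k)$. The short generator inequality passes to the limit as $d(y_\infty,\bar\gamma g y_\infty)\ge 1$ for every $g$ in the closed subgroup $G_0\le G$ obtained as the limit of $\langle\gamma_1,\ldots,\gamma_{i-1}\rangle$ at scale $\ell_i$. Running this construction along a sparse subsequence $i_1<i_2<\cdots$ with $\ell_{i_s}/\ell_{i_{s+1}}\to 0$ and extracting an equivariant diagonal limit yields a single closed subgroup of $\Isom(\mathbb{R}^k)$ admitting infinitely many Gromov-short elements of unit displacement at the origin -- impossible, since any closed subgroup of $\Isom(\mathbb{R}^k)$ admits at most $C(k)$ short generators by standard Minkowski reduction in its maximal translation part.

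The hardest part of this plan is the coupling of the transformation theorem with the equivariant limit so that the limit space is a genuine metric product \emph{and} enough short generator data from widely separated scales $\ell_i$ survives to a single diagonal limit on $\mathbb{R}^k$. The dependence $\delta=\delta(n,\eta)$ is precisely what makes the propagation of the $\mathbb{R}^k$-splitting uniform from the initial scale $R$ out to arbitrarily large $\ell_i$; without this, the two factors in the limit could ``rotate'' relative to each other, and the short generator condition would not transfer to a meaningful statement about $\Isom(\mathbb{R}^k)$.
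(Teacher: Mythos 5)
Your setup (contradiction, short generators, blow-down by the generator lengths, equivariant limits, use of the transformation theorem to propagate the $k$-splitting across scales) is in the right spirit, but the proposal has three concrete gaps. First, with a single manifold and a fixed $\delta=\delta(n,\eta)>0$, the rescaled limits $(Y,y_\infty)$ are only $\delta$-Gromov--Hausdorff close to products at each scale; they are \emph{not} genuine isometric products $\R^k\times Z$, and $Z$ having ``no line'' is not meaningful at this level of approximation. Exact splitting in the limit only appears if you run the contradiction in $\delta$ itself, i.e.\ take a sequence of manifolds with $\delta_i\to 0$ (this is how the paper arranges it). Second, your passage from Definition \ref{DefAlmostk-Polar}(2) to ``$Y$ is a metric cone with cross-section of diameter $<\pi$'' is false: the pole condition survives to the limit as the existence of rays from $y_\infty$ through every point, but a space with a pole need not be a metric cone (a paraboloid is not), so the claimed canonical factorization and ``every isometry fixes the apex'' are unsupported as written. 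The correct replacement (pole plus no-line forces isometries to fix the base point, and then the orbit of the limit group is controlled) requires an algebraic hypothesis on the limit group: the paper first invokes Wilking's reduction (Theorem \ref{WilkingsReduction}) to make $\pi_1$ abelian, and its Lemma \ref{PolarnessLemma} needs nilpotency --- the reflection-plus-translation example in the paper shows the orbit-structure conclusion genuinely fails without it. You never reduce to the abelian case.

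The most serious gap is the endgame. Short generators taken at widely separated scales $\ell_{i_1}\ll\ell_{i_2}\ll\cdots$ do \emph{not} survive into a single diagonal equivariant limit as ``infinitely many unit-displacement short elements'' of one closed subgroup of $\Isom(\R^k)$: under any fixed diagonal rescaling the displacements of the earlier generators tend to $0$ and those of the later ones tend to $\infty$, so no single limit group sees more than boundedly many of them, and the proposed ``Minkowski reduction'' bound has nothing to bite on. What actually produces the contradiction in the paper is an inductive dimension count across $k+2$ generator scales: at each scale, the limit orbit is disconnected (Lemma \ref{Nonconnectness}), the contrapositive of Lemma \ref{PolarnessLemma} then forces every tangent cone of the quotient limit at the base point to split an extra $\R$-factor, and the stability Lemma \ref{StablityLemma} (built on Theorem \ref{TransThm} and the lifting of splitting maps) transports that extra factor up to the next, much larger, generator scale, so the number $t_s$ of Euclidean factors of the quotient limit strictly increases at each step, eventually exceeding $k$ --- impossible. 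Your proposal uses neither the disconnectedness of the limit orbit nor any mechanism that compares the quotient's splitting at consecutive generator scales, and without such a mechanism the argument does not close.
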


As mentioned before, Theorem \ref{MainThm} implies Theorem \ref{PansResult}. Moreover, as is well-known, the tangent cones at infinity of an open manifold with nonnegative Ricci curvature are far from unique. As demonstrated in \cite{CN13}, there is a great deal of freedom in selecting our tangent cones at infinity. Thus Theorem \ref{MainThm} covers further examples, such as the case that $\tilde M$ possesses two tangent cones at infinity with different numbers of maximal Euclidean factor, which are not close to each other in the Gromov-Hausdorff sense.

For $k$ in Theorem \ref{MainThm} large, we have the following corollary.

\begin{corollary}\label{Largek}
	For any $n\ge 2$ and $v>0$, there exist constants $\delta_1(n,v)$, $\delta_2(n)$ to the following effect. Let $M$ be a complete open $n$-manifold with nonnegative Ricci curvature. If the Riemannian universal cover $\tilde M$ and $\tilde p\in\tilde M$ satisfies either of the following conditions, 
	\begin{enumerate}
		\item the volume $\vol(B_r(\tilde p))\ge vr^n$, and $B_r(\tilde p)$ is $(\delta_1(n,v),n-2)$-Euclidean.

		\item $B_r(\tilde p)$ is $(\delta_2(n),n-1)$-Euclidean.

	\end{enumerate}
	for any large $r>0$, then $\pi_1(M)$ is finitely generated.
\end{corollary}

\begin{remark}
	By employing the approach of Theorem \ref{MainThm}, according to \cite[Proposition 4.2]{Pan22}, it can be shown that for any $\epsilon>0$, there exists $\delta=\delta(n,\eta,\epsilon)$ such that a space $(M,p)$ satisfying the conditions of Theorem \ref{MainThm} or Corollary \ref{Largek} exhibits an escape rate less than $\epsilon$, as defined in \cite{Pan22}. The detailed proof for this has been included in the appendix. Therefore, according to the main theorem in \cite{Pan22}, $\pi_1(M)$ in Theorem \ref{MainThm} or Corollary \ref{Largek} is virtually abelian.
	
\end{remark}

	Recall that in a metric space $Z$, a point $z\in Z$ is called a pole of $Z$, if for any $z'\in Z\backslash\{z\}$, there exists a ray starting from $z$ and passing through $z'$. Our next result states that the base point of any tangent cone at infinity of $M$ in Theorem \ref{MainThm} is nearly a pole. 
	
	\begin{theorem}\label{MisalsmotkPolar}
		For any integer $n\ge 2$ and small $\eta,\epsilon>0$, there exists $\delta=\delta(n,\eta,\epsilon)>0$ to the following effect. Let $M$ be a manifold as described in Theorem \ref{MainThm}. Then for any tangent cone at infinity, $(Y,y^*)$, of $M$, there exists a length space $(Z,z)$ with a pole at $z$ such that $d_{GH}((Y,y^*),(Z,z))\le\epsilon$.

	\end{theorem}

	The above theorem clarifies that the class of manifolds in Theorem \ref{MainThm} is a subsets of the manifolds in Sormani's Pole Group theorem \cite[Theorem 11]{Sor99}. Hence, Theorem \ref{MainThm} can be viewed as a corollary of Sormani's result combining Theorem \ref{MisalsmotkPolar}. However, our proof of Theorem \ref{MisalsmotkPolar} logically relies on the conclusion of Theorem \ref{MainThm}.

	A natural sufficient condition for $\tilde M$ to satisfy almost $k$-polar at infinity is that $\tilde M$ exhibits almost maximal volume growth. In such an extreme case, \cite[Theorem A]{Pan19b} claims that the fundamental group of $M$ is finitely generated and virtually abelian. Building upon Theorem \ref{MisalsmotkPolar}, we can derive an improved version as follows.
	
	\begin{theorem}\label{MaximalVolGrowth}
		For any integer $n\ge 2$, there exists a constant $\delta=\delta(n)$, to the following effect. Let $M$ be a complete open $n$-manifold with nonnegative Ricci curvature. If the Riemannian universal cover $\tilde M$ satisfies, for any $r>0$, $\vol(B_r(\tilde p))\ge (1-\delta)\vol(B_r(0^n))$, then $M$ deformation retracts to a closed submanifold $F$ which is diffeomorphic to a flat manifold, provided $M$ is not simply connected.
	\end{theorem}
	\begin{problem}
		The author is uncertain whether the conclusion of the above theorem can be strengthened to that $M$ is diffeomorphic to a flat open manifold.
	\end{problem}

	Before proceeding to the next section, we give a brief explanation of why Theorem \ref{MainThm} holds. If the conclusion of Theorem \ref{MainThm} is false, then there exists an infinite Gromov's short generator system $\{\gamma_1,...,\gamma_s,...\}\subset\pi_1(M,p)$. Let $\{r_1,...,r_s,...\}$ be the length spectrum of this basis. We observe that, if one additionally assumes $\pi_1(M)$ is abelian or nilpotent, then there exists a small $\delta>0$ not depending on $i$, satisfying that, $(\delta^{-1}r_i^{-1}M,p)$ almost splits one more $\R$-factor than $(r_i^{-1}M,p)$ does in the Gromov-Hausdorff sense. Another observation is that, under the almost stability assumption in Definition \ref{DefAlmostk-Polar} (1) on $\tilde M$, by a transformation theorem for almost splitting functions, this additional almost $\R$-factor will be preserved when scaling up the metric of $(\delta^{-1}r_i^{-1}M,p)$. Now if there exists $i_1<i$ such that $r_{i_1}<\delta r_i$, then we conclude that $(\delta^{-1}r_{i_1}^{-1}M,p)$ almost splits two additional Euclidean factors compared to $(r_i^{-1}M,p)$. By contradiction assumption, we may start the above argument from a sufficiently large $i$, such that we could repeat the above process $n+1$ times which will yield a contradiction. 
	
	We have organized the proofs of Theorem \ref{MainThm} and Corollary \ref{Largek} in Section \ref{ProfMain}, and the proofs of Theorem \ref{MisalsmotkPolar} and \ref{MaximalVolGrowth} in Section \ref{ProfotherThms}. The discussion of virtual abelianness and other consequences is included in the appendix.

\section{Preliminary}

In this paper, we employ $\Psi(x_1, \ldots, x_k|c_1, \ldots, c_s)$ to represent certain universal positive functions satisfying the condition $\Psi\to0$ as $x_1, \ldots, x_k \to 0$ while $c_1, \ldots, c_s$ remain fixed. Note that these functions may vary from line to line without explicit specification if there's no ambiguity.

In this section, we provide references to some key lemmas essential to our proof for the convenience of our readers.

Firstly, we start with the following reduction by Wilking.
\begin{theorem}\cite{Wil00}\label{WilkingsReduction}
	Let $M$ be an open manifold with nonnegative Ricci curvature. If $\pi_1(M)$ is not finitely generated, then it contains a non-finitely generated abelian subgroup.
\end{theorem}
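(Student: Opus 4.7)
The plan is to combine Gromov's polynomial growth theorem with a group-theoretic extraction principle. First, using the Bishop-Gromov volume comparison on the universal cover $\tilde M$, one gets polynomial volume growth, and Milnor's classical pigeonhole argument then shows that every finitely generated subgroup $H\subset G:=\pi_1(M)$ has polynomial word growth. By Gromov's polynomial growth theorem, each such $H$ is virtually nilpotent, hence polycyclic and noetherian (every subgroup of $H$ is itself finitely generated).

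Next, I would argue by contradiction: suppose $G$ is not finitely generated but every abelian subgroup of $G$ is finitely generated. Write $G=\bigcup_{i\ge 1} H_i$ as a strictly ascending union of finitely generated subgroups, and consider the Hirsch lengths $h(H_i)$, which form a non-decreasing sequence. In the case where $h(H_i)$ is eventually constant, each inclusion $H_i\subset H_{i+1}$ has finite index for $i$ large, so $G$ is an infinite directed union of finite-index extensions of some $H_{i_0}$; the torsion-free quotient of $H_{i_0}^{\mathrm{ab}}$ then admits a non-finitely generated abelian lift in $G$ via a divisibility construction, with prototype $\mathbb{Z}[1/2]=\bigcup_i 2^{-i}\mathbb{Z}$. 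In the case $h(H_i)\to\infty$, at each stage select a "fresh" central element $z_i$ in the Fitting subgroup $F_i$ of $H_i$ whose image in $Z(F_i)/Z(F_{i-1})$ has infinite order, and take the subgroup $A:=\langle z_1,z_2,\dots\rangle$ they generate.

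The main obstacle is the commutativity and coherence of the chosen generators. In the unbounded-Hirsch case, central elements of distinct Fitting subgroups need not commute in $G$, only within their common virtually nilpotent hulls; to fix this, I would use the polycyclicity of centralizers to show that $C_{H_j}(z_i)$ has finite index in $H_j$ after replacing $z_i$ by a suitable power, then diagonally modify the sequence so that $z_j^{n_j}\in\bigcap_{i<j}C_G(z_i)$ for appropriate exponents $n_j$, preserving infinite order in the relevant successive quotient. In the bounded-Hirsch case, the abelian lift requires coherently splitting off torsion across the whole chain, again using noetherian properties of polycyclic groups and the assumption that the torsion-free rank cannot increase. Once these adjustments are made, the resulting abelian subgroup is by construction not finitely generated, contradicting the working assumption and completing the proof. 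The crucial input throughout is not merely virtual solvability but the full polycyclic (noetherian) structure guaranteed by Gromov's theorem, which is exactly what keeps the diagonal selection feasible.
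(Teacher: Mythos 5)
The paper offers no proof of this statement; it simply quotes Wilking \cite{Wil00}, so the comparison is with Wilking's argument. Your opening step coincides with his: Bishop--Gromov plus Milnor's argument gives every finitely generated subgroup polynomial growth, and Gromov's theorem makes it virtually nilpotent, hence virtually polycyclic and noetherian. But Milnor's argument bounds the growth degree by $n=\dim M$, so by the Bass--Guivarc'h formula the Hirsch lengths $h(H_i)$ are uniformly bounded by $n$: your ``unbounded Hirsch length'' case is vacuous, and you miss the uniform bound that actually drives Wilking's induction. (Your treatment of that case is also false as stated: an element central in the Fitting subgroup of $H_i$ need not acquire a finite-index centralizer in a later $H_j$ after passing to powers. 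Take $H_i=\langle x,z\rangle\cong\mathbb{Z}^2$ inside the discrete Heisenberg group $H_j$ with $[x,y]=z$ central; then $C_{H_j}(x^m)=\langle x,z\rangle$ has infinite index for every $m\neq 0$.)

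The genuine gap is in the bounded case, which is the entire content of the theorem. Your ``divisibility construction'' with prototype $\mathbb{Z}[1/2]$ only detects growth of the chain in the torsion-free abelianized direction and produces nothing when the chain grows in torsion. The extreme instance is Hirsch length $0$: all $H_i$ are finite, $G$ is an infinite locally finite group, the torsion-free quotient of $H_{i_0}^{\mathrm{ab}}$ is trivial, and the statement you need --- every infinite locally finite group contains an infinite abelian subgroup (automatically not finitely generated, since it is torsion) --- is the Hall--Kulatilaka/Kargapolov theorem, whose known proofs rely on the Feit--Thompson odd order theorem; no lifting or divisibility heuristic yields it. Mixed examples such as $H_i=\mathbb{Z}^t\times F_i$ with $F_i$ finite and strictly increasing exhibit the same obstruction at every Hirsch length, so your case split does not isolate the difficulty. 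Wilking's actual proof runs an induction on the uniform Hirsch-length bound, with the locally finite case (via Hall--Kulatilaka) as the base and a careful analysis of the finite-index and commensuration structure of maximal-rank subgroups in the inductive step. As written, your proposal replaces this core argument by an unproved assertion, so it does not establish the theorem.
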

Note that our Theorem \ref{MainThm} imposes restriction only on the universal cover $\tilde M$, hence by the above reduction, it suffices to prove Theorem \ref{MainThm} under an extra assumption that $\pi_1(M)$ is abelian.

The second key ingredient is an observation by Pan (\cite{Pan20}), which we restate in a slightly generalized form.

\begin{lemma}\label{Nonconnectness}
	Let $(M_i,p_i)$ be a sequence of open $n$-manifolds with nonnegative Ricci curvature, and $(\tilde M_i,\tilde p_i)\to(M_i,p_i)$ be the Riemannian universal cover. For each $i$, fix a set of Gromov's short generators $\{\gamma_{i,1},\gamma_{i,2},...\}\subset\pi_1(M_i)$ and let $r_{i,j}:=d_i(\tilde p_i,\gamma_{i,j}\tilde p_i)$, where $d_i$ is the distance on $\tilde M_i$. If $(Y,p,G)$ is an equivariant Gromov-Hausdorff limit of a subsequence of $(r_{i,j_i}^{-1}\tilde M_i,\tilde p_i,\Gamma_i)$, where $\Gamma_i=\pi_1(M_i,p_i)$ is the deck transformation, then the orbit $G(p)$ is not connected.
\end{lemma}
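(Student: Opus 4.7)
My plan is to exploit the minimality built into Gromov's short generator system. I would set $H_i := \langle \gamma_{i,1},\ldots,\gamma_{i,j_i-1}\rangle \le \Gamma_i$. By construction, $\gamma_{i,j_i}$ realizes the minimal displacement $r_{i,j_i}$ of $\tilde p_i$ among all elements of $\Gamma_i\setminus H_i$. The key observation is that $\Gamma_i\setminus H_i$ is preserved by left multiplication by any $h\in H_i$, so that $h^{-1}\gamma_{i,j_i}\notin H_i$ for all $h\in H_i$, whence
\[
d_i(h\tilde p_i,\gamma_{i,j_i}\tilde p_i)=d_i(\tilde p_i,h^{-1}\gamma_{i,j_i}\tilde p_i)\ge r_{i,j_i}.
\]
In the rescaled metric $r_{i,j_i}^{-1}d_i$ this reads $d(\gamma_{i,j_i}\tilde p_i, H_i\cdot\tilde p_i)\ge 1$, and the same argument applied to any $\gamma\in\Gamma_i\setminus H_i$ shows that $\gamma\tilde p_i$ sits at rescaled distance $\ge 1$ from the orbit $H_i\cdot\tilde p_i$.

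Next I would pass to the equivariant Gromov-Hausdorff limit along a diagonal subsequence (using the Ricci lower bound for the needed pointed GH compactness) so that $(r_{i,j_i}^{-1}\tilde M_i,\tilde p_i,\Gamma_i,H_i)$ converges to $(Y,p,G,H)$ with $H\le G$ a closed subgroup, and so that $\gamma_{i,j_i}$ converges to some $g_1\in G$ with $d(p,g_1\cdot p)=1$. For any orbit point $g\cdot p\in G\cdot p$ I would pick a defining sequence $\gamma^{(i)}\in\Gamma_i$ with $\gamma^{(i)}\tilde p_i\to g\cdot p$; a further extraction puts $\gamma^{(i)}$ either always inside or always outside $H_i$. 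In the first case $g\cdot p\in\overline{H\cdot p}$, while in the second case the displacement bound of the previous paragraph propagates to the limit and forces $d(g\cdot p,\overline{H\cdot p})\ge 1$.

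To finish, I would observe that $G\cdot p$ is then contained in the disjoint union of the open sets $U:=\{y\in Y:d(y,\overline{H\cdot p})<1/2\}$ and $V:=\{y\in Y:d(y,\overline{H\cdot p})>1/2\}$. Since $p\in U$ while $g_1\cdot p\in V$ (the latter because $\gamma_{i,j_i}\notin H_i$ by the very definition of Gromov's short generators), both $U\cap G\cdot p$ and $V\cap G\cdot p$ are nonempty, so $G\cdot p$ is disconnected. I expect the main technical care to go into setting up the simultaneous equivariant GH convergence of the nested pair $(\Gamma_i,H_i)$ acting on the rescaled covers along a single subsequence, so that limits of elements of $H_i$ do land in a closed subgroup $H\le G$ and the orbit dichotomy above is genuinely available; this is a standard diagonal argument but needs to be stated cleanly.
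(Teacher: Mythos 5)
Your proposal is correct and follows essentially the same route as the paper: you take the subgroup $H_i=\langle\gamma_{i,1},\ldots,\gamma_{i,j_i-1}\rangle$ (the paper's $\Gamma_i^0$), use the minimality in the short generator selection to get the uniform gap $d_i(\gamma\tilde p_i,H_i\tilde p_i)\ge r_{i,j_i}$ for all $\gamma\in\Gamma_i\setminus H_i$, pass to the equivariant limit of the pair $(\Gamma_i,H_i)$, and conclude disconnectedness of $G(p)$ from the resulting $0$-or-$\ge 1$ distance dichotomy. The paper phrases the final step as $G^0(p)$ being open and closed in $G(p)$ with nonempty complement rather than via your explicit $U/V$ separation, but this is only a cosmetic difference.
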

\begin{proof}
	The case for $j_i=1$ is trivial. Hence we may assume all $j_i>1$. Let $\Gamma_i^0$ be the subgroup of $\Gamma_i$ generated by $\{\gamma_{i,1},\gamma_{i,2},...,\gamma_{i,j_i-1}\}$. Up to a subsequence, we may assume the equivariant Gromov-Hausdorff convergence $(r_{i,j_i}^{-1}\tilde M_i,\tilde p_i,\Gamma_i^0)\GH(Y,p,G^0)$. For each fixed $i$ and any $\gamma_i\in\Gamma_i-\Gamma_i^0$, $d_i(\gamma_i\tilde p_i,\Gamma_i^0(\tilde p_i))\ge d_i(\gamma_{i,j_i}\tilde p_i,\tilde p_i)=r_{i,j_i}$. Otherwise, there would exist $\gamma_i'\in\Gamma_i^0$, such that $d_i(\gamma_i'\gamma_i\tilde p_i,\tilde p_i)< d_i(\gamma_{i,j_i}\tilde p_i,\tilde p_i)$, then according to the selection step of Gromov's short generators, the next item of $\gamma_{i,j_i-1}$ is not $\gamma_{i,j_i}$, which contradicts to our assumption. So we have $$r_{i,j_i}^{-1}d_i(\gamma_i\tilde p_i,\Gamma_i^0(\tilde p_i))\ge 1$$ for any $\gamma_i\in\Gamma_i-\Gamma_i^0$. This concludes that for any $\gamma\in G-G^0$, $d_\infty(\gamma p,G^0(p))\ge1$, where $d_\infty$ is the distance on $Y$. So $G^0(p)$ is an open subset of $G(p)$. As an equivariant Gromov-Hausdorff limit group, $G^0$ is closed. Combining $G(p)-G^0(p)\neq\emptyset$, we conclude $G(p)$ is not connected.

\end{proof}

At the end of this section, we present our crucial lemma, a transformation theorem for the splitting map. Recall that a Cheeger-Colding's $(\delta,k)$-splitting map is a harmonic map, $u=(u^1,...,u^k):B_r(p)\to\R^k$ satisfying that, for $\alpha,\beta=1,...,k$,
\begin{enumerate}
	\item  $\sup_{x\in B_r(p)}\|\grad u^\alpha(x)\|\le 1+\delta$,
	
	\item  $\frac{1}{\vol(B_r(p))}\int_{B_r(p)}|\spa{\grad u^{\alpha},\grad u^{\beta}}-\delta^{\alpha\beta}|\le \delta$,
	
\end{enumerate}	

Almost splitting maps serve as a tool to characterize the almost splitting nature of Euclidean factors for a geodesic ball in the Gromov-Hausdorff sense.
\begin{theorem}[Almost Splitting Theorem \cite{CC96}]\label{SplittingThm}
	Given $n\ge 2,\epsilon>0$, there exists a constant $\delta=\delta(n,\epsilon)>0$, to the following effect. Let $(M,p)$ be an $n$-manifold with Ricci curvature $\Ric_{M}\ge-(n-1)\delta$.
	\begin{enumerate}
		\item If there exists a $(\delta,k)$-splitting map $u: B_2(p)\to\R^k$, then $B_1(p)$ is $\epsilon$-Gromov-Hausdorff close to $B_1((0^k,x^*))\subset\R^k\times X$ for some length space $X$.
		
		\item If $B_{\delta^{-1}}(p)$ is $\delta$-Gromov-Hausdorff close to $B_{\delta^{-1}}((0^k,x^*))\subset\R^k\times X$, then there exists an $(\epsilon,k)$-splitting map $u:B_1(p)\to \R^k$.
	\end{enumerate}

\end{theorem}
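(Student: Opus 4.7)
The strategy for both parts is the Cheeger-Colding template. For (1): use the Bochner identity and harmonicity of $u$ to derive a mean $L^2$-Hessian bound, apply the Cheeger-Colding segment inequality to convert it into pointwise almost-affineness of $u$ along generic geodesics, and use these to build an explicit Gromov-Hausdorff approximation onto a product $\R^k\times X$. For (2): start from the given GH closeness, produce ``almost-Busemann'' functions via approximate antipodal points, and harmonically replace them on a smaller ball to obtain the splitting map. Throughout, $\Psi=\Psi(\delta\mid n)$ denotes any quantity tending to $0$ as $\delta\to 0$ with $n$ fixed.

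For (1), applying Bochner to each coordinate,
\[
\tfrac{1}{2}\lap|\grad u^\alpha|^2=|\hess u^\alpha|^2+\Ric(\grad u^\alpha,\grad u^\alpha),
\]
integrating against a standard cutoff on $B_{7/4}(p)$, and using harmonicity together with $|\grad u^\alpha|\le 1+\delta$ and $\Ric\ge-(n-1)\delta$, I obtain the mean estimates
\[
\frac{1}{\vol(B_{3/2}(p))}\int_{B_{3/2}(p)}|\hess u^\alpha|^2\le\Psi,\qquad\frac{1}{\vol(B_{3/2}(p))}\int_{B_{3/2}(p)}\bigl|\spa{\grad u^\alpha,\grad u^\beta}-\delta^{\alpha\beta}\bigr|\le\Psi.
\]
The Cheeger-Colding segment inequality applied to $|\hess u^\alpha|^2$ then shows that for all but a $\Psi$-fraction of pairs $(x,y)\in B_1(p)\times B_1(p)$ the restriction of each $u^\alpha$ to a minimizing $xy$-geodesic is $\Psi$-close to affine; combined with the integrated near-orthonormality, this gives $\bigl|d(x,y)^2-|u(x)-u(y)|^2-d_X(x,y)^2\bigr|\le\Psi$ for the transverse pseudo-distance $d_X(x,y):=\sqrt{d(x,y)^2-|u(x)-u(y)|^2}$. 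Quotienting $B_1(p)$ by the equivalence $d_X=0$ and endowing the quotient with the induced length metric produces a length space $X$, and $x\mapsto(u(x),[x])$ is the desired $\epsilon$-Gromov-Hausdorff approximation onto $B_1((0^k,x^*))\subset\R^k\times X$.

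For (2), fix a $\delta$-GH approximation $\Phi:B_{\delta^{-1}}((0^k,x^*))\to B_{\delta^{-1}}(p)$ and pick points $q_\alpha^\pm\in B_{\delta^{-1}}(p)$ whose $\Phi$-preimages approximate $\pm\delta^{-1/2}e_\alpha$. Define the almost-Busemann functions $b^\alpha(x):=\tfrac{1}{2}(d(x,q_\alpha^-)-d(x,q_\alpha^+))$; on $B_2(p)$ these satisfy $\|b^\alpha-\pi_\alpha\circ\Phi^{-1}\|_{L^\infty}\le\Psi$ and, by the Laplacian comparison theorem under $\Ric\ge-(n-1)\delta$, $|\lap b^\alpha|\le\Psi$ in the barrier sense. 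Solve $\lap u^\alpha=0$ on $B_{3/2}(p)$ with $u^\alpha=b^\alpha$ on $\partial B_{3/2}(p)$; the maximum principle yields $\|u^\alpha-b^\alpha\|_{L^\infty(B_{3/2}(p))}\le\Psi$, and a Bochner-plus-maximum-principle argument for $|\grad u^\alpha|^2$ on $B_1(p)$ (with boundary control coming from the $L^\infty$ closeness to $b^\alpha$) produces $|\grad u^\alpha|\le 1+\epsilon$, while the integrated Bochner identity supplies the inner-product bound, yielding an $(\epsilon,k)$-splitting map.

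The principal obstacle is the rigorous identification of $X$ in (1): the pseudo-metric $d_X$ is only defined up to $\Psi$-errors and one must verify that the quotient carries a genuine length metric and that $x\mapsto(u(x),[x])$ is both $\Psi$-injective and $\Psi$-surjective onto $B_1((0^k,x^*))$. A cleaner fallback is a pure compactness-and-contradiction argument: assume (1) fails for some $\epsilon>0$, take counterexamples $(M_i,p_i,u_i)$ with $\delta_i\to 0$, extract simultaneous pointed Gromov-Hausdorff and uniform harmonic limits (the latter by Arzel\`a-Ascoli from the gradient bound) to produce $u_\infty:B_2(p_\infty)\to\R^k$ on a limit space $(Y_\infty,p_\infty)$, and pass the Hessian bound to the limit ($\hess u_\infty=0$, $|\grad u_\infty^\alpha|=1$, orthonormality) to invoke the Cheeger-Gromoll/Cheeger-Colding splitting theorem and conclude $Y_\infty\cong\R^k\times X_\infty$ isometrically, contradicting the assumed failure. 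The same compactness mechanism simplifies (2) as well.
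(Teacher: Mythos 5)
This statement is quoted from Cheeger--Colding \cite{CC96} and the paper offers no proof of it, so there is no internal argument to compare against; your proposal is in effect an attempt to reprove the cited result, and in outline it follows exactly the Cheeger--Colding strategy (Bochner plus segment inequality plus an explicit Gromov--Hausdorff approximation for (1); almost-Busemann functions and harmonic replacement for (2)). The architecture is the right one, but two of your steps are not correct as stated and one is incomplete.

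In (2), Laplacian comparison under $\Ric\ge-(n-1)\delta$ only yields the one-sided barrier bound $\lap\, d(\cdot,q_\alpha^{\pm})\le\Psi$ at scale $\delta^{-1/2}$; it does not give the two-sided pointwise bound $|\lap b^\alpha|\le\Psi$ you assert. The missing direction comes from the Abresch--Gromoll excess estimate ($0\le d(\cdot,q_\alpha^+)+d(\cdot,q_\alpha^-)-d(q_\alpha^+,q_\alpha^-)\le\Psi$ on $B_2(p)$), and even then one only controls $\lap b^\alpha$ in an integrated ($L^1$, against cutoffs) sense, so the comparison of $b^\alpha$ with its harmonic replacement must be run through Green's function/Dirichlet-energy estimates rather than a bare maximum principle. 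Likewise, the integrated Bochner identity alone does not produce $\int|\langle\grad u^\alpha,\grad u^\beta\rangle-\delta^{\alpha\beta}|\le\Psi$ for $\alpha\neq\beta$: in \cite{CC96} the off-diagonal orthogonality is obtained by repeating the whole construction for the diagonal directions, comparing $\tfrac1{\sqrt2}(b^\alpha\pm b^\beta)$ with distance functions to points in the directions $\tfrac1{\sqrt2}(e_\alpha\pm e_\beta)$, a step your sketch omits. In (1), you correctly flag that your quotient ``pseudo-metric'' $d_X$ is not known to satisfy the triangle inequality even approximately; the standard repair is to take $X$ to be (a neighborhood of) a level set $u^{-1}(0)$ with an induced metric and prove the almost-Pythagorean estimate via the segment inequality, or to use your compactness fallback. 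The fallback is legitimate provided you invoke the splitting theorem on Ricci-limit/RCD spaces as an external input (e.g.\ Gigli's theorem), together with the convergence theory needed to pass harmonicity, $|\grad u_\infty^\alpha|\equiv1$ and $\hess u_\infty=0$ to the limit; but be aware that in \cite{CC96} the splitting theorem for limit spaces is itself deduced from the quantitative almost-splitting estimates, so as a reproof ``from scratch'' of the cited theorem that route is circular unless the RCD-splitting input is explicitly assumed.
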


The following transformation theorem illustrates how the stability of almost $k$-Euclidean at a point under different scales influences the almost splitting map.

\begin{theorem}[Transformation Theorem \cite{CJN21,HH22}]\label{TransThm}
	For any $n,\epsilon>0$ and $\eta>0$, there exists $\delta_{0}=\delta_0(n, \epsilon, \eta)>0$ such that, for every $\delta\in(0,\delta_{0}]$, the following holds.
	Suppose $(M,p)$ is an $n$-manifold with $\Ric_{M}\ge-(n-1)\delta$, and there is an $s\in(0,1)$ such that, for a $k\in[1,n]$ and any $r\in[s,1]$, $B_{r}(p)$ is $(\delta, k)$-Euclidean but not $(\eta, k+1)$-Euclidean. Let $u : (B_{1}(p),p)\rightarrow(\mathbb{R}^{k'},0^{k'})$ be a $(\delta, k')$-splitting map for some integer $k'\in[1,k]$. Then for each $r\in[s,1]$, there exists a $k'\times k'$ lower triangle matrix $T_{r}$ with positive diagonal entries such that $T_{r}u : B_{r}(p)\rightarrow\mathbb{R}^{k}$ is an $(\epsilon,k')$-splitting map.
\end{theorem}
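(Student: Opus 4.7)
The plan is to define $T_r$ explicitly by a Cholesky-type factorization at each scale and then bound its operator norm via a compactness/contradiction argument. Set
\[
A_{\alpha\beta}(r) := \dashint_{B_r(p)} \langle \nabla u^\alpha, \nabla u^\beta\rangle \, d\vol ,
\]
which is symmetric and positive semidefinite. Whenever $A(r)$ is positive definite, define $T_r$ to be the unique $k'\times k'$ lower triangular matrix with positive diagonal entries satisfying $T_r\,A(r)\,T_r^{\top}=I_{k'}$ (equivalently, $T_r=L^{-1}$ where $A(r)=LL^{\top}$ is the standard Cholesky factorization). Then by construction, the averaged orthonormality condition
\[
\dashint_{B_r(p)} \langle \nabla (T_r u)^\alpha, \nabla (T_r u)^\beta\rangle\,d\vol = \delta^{\alpha\beta}
\]
holds exactly. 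The pointwise bound $|\nabla u^\alpha|\le 1+\delta$ on $B_1(p)$ supplied by the $(\delta,k')$-splitting hypothesis transfers to $|\nabla(T_r u)^\alpha|\le\|T_r\|_{\mathrm{op}}(1+\delta)$ on $B_r(p)\subset B_1(p)$, so the remaining condition (1) in the definition of an $(\epsilon,k')$-splitting map reduces to proving $\|T_r\|_{\mathrm{op}}\le 1+\epsilon/2$ for $\delta$ small, uniformly in $r\in[s,1]$.

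The crux is therefore a uniform lower bound on the spectrum of $A(r)$. I would argue by contradiction and Gromov-Hausdorff compactness. Assume the bound fails: choose a sequence of hypothesis-satisfying triples $(M_i,p_i,u_i)$ with $\delta_i\to 0$ and a scale $r_i\in[s_i,1]$, \emph{chosen maximally}, such that the smallest eigenvalue $\lambda_i$ of $A_i(r_i)$ tends to zero. Let $v_i\in S^{k'-1}$ be a corresponding unit eigenvector and set $f_i:=v_i\cdot u_i$. By the maximality of $r_i$, for scales $r\in(r_i,1]$ the Cholesky construction already yields a valid $T_r$, and $f_i$ (after the induced renormalization by coefficients of order one) behaves like an $(\epsilon',1)$-splitting coordinate on $B_r(p_i)$ for $r$ slightly larger than $r_i$, whereas $\dashint_{B_{r_i}(p_i)}|\nabla f_i|^2=\lambda_i\to 0$.

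Next I would rescale the metric on $M_i$ by $r_i^{-2}$, center at $p_i$, and pass to a pointed limit $(Y,p_\infty)$. The $(\delta_i,k)$-Euclidean hypothesis on all scales $r\in[s_i,1]$ translates, in the rescaled limit, to the isometric splitting $B_\rho(p_\infty)\cong \R^k\times (\text{factor})$ for every $\rho\in[1,r_i^{-1}]$; meanwhile the non-$(\eta,k+1)$-Euclidean hypothesis persists and prevents an additional $\R$-factor from appearing. Using Cheeger-Colding convergence of harmonic functions with uniformly bounded gradient, the $f_i$ converge to a harmonic $f_\infty$ on $Y$ which, by the $(\epsilon',1)$-splitting at scales $\rho\in(1,r_i^{-1}]$, is a nontrivial linear coordinate along the $\R^k$-factor on annular regions outside $B_1(p_\infty)$, yet whose averaged squared gradient on $B_1(p_\infty)$ vanishes. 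A harmonic function on $\R^k\times X$ that is linear with nonzero slope on the $\R^k$-factor at some scale extends to the same linear function at every scale; its gradient is therefore a nonzero constant and cannot vanish in $L^2$-average on any ball, a contradiction.

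The main obstacle is this final contradiction step. One must rigorously propagate the collapse of one eigenvalue at scale $r_i$ into the vanishing of $\nabla f_\infty$ on $B_1(p_\infty)$, and simultaneously use the non-$(\eta,k+1)$-Euclidean hypothesis in an essential way to force the limit $f_\infty$ to live inside the genuine $\R^k$-factor rather than escaping into an extra Euclidean direction that could otherwise absorb the degeneracy. Making the maximal-failure-scale argument work also requires continuity of $A(r)$ in $r$ (from the pointwise gradient bound and volume comparison) and an appropriate $\epsilon$-regularity translating the $(\delta_i,k)$-Euclidean property uniformly across the scale band $[r_i,1]$ — the essential content of the CJN21/HH22 $\epsilon$-regularity framework being invoked.
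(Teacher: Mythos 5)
This theorem is not proved in the paper at all --- it is quoted from \cite{CJN21,HH22} --- so the comparison here is with the known proofs, and against those your plan has a fatal structural flaw rather than a fixable gap. Your reduction of the whole problem to the uniform bound $\|T_r\|_{\mathrm{op}}\le 1+\epsilon/2$ (equivalently, a uniform positive lower bound on the smallest eigenvalue of $A(r)$ for all $r\in[s,1]$, with constants independent of $s$) is an attempt to prove a statement strictly stronger than the theorem, and that stronger statement is false. The entire reason the transformation theorem exists is that $\delta$-splitting maps do \emph{not} remain $\epsilon$-splitting maps at lower scales under these hypotheses: near cone-type points (e.g.\ a smoothed two-dimensional cone of angle $2\pi-c(\eta)$, crossed with a Euclidean factor, which is $(\delta,k)$-Euclidean for arbitrarily small $\delta$ and not $(\eta,k+1)$-Euclidean at every scale) the harmonic almost-linear coordinate has $|\nabla u|\sim r^{\alpha}$ with $\alpha>0$, so the smallest eigenvalue of $A(r)$ decays like a power of $r$ and $\|T_r\|$ must be allowed to grow (the sharp statement in \cite{CJN21} is only $\|T_r\|\le r^{-\Psi(\delta)}$). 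Consequently no choice of $\delta_0(n,\epsilon,\eta)$ can make your eigenvalue bound hold uniformly in $s$, and your compactness/contradiction scheme is chasing a false target; this also explains why your ``maximal failure scale'' device cannot produce a contradiction --- the eigenvalue collapse is gradual over many scales, so at the first scale where your bound fails nothing quantitatively dramatic is visible in the rescaled limit.

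There is a second, independent gap: even granting a norm bound on $T_r$, your Cholesky normalization only forces the \emph{signed} average $\dashint_{B_r}\bigl(\langle\nabla(T_ru)^\alpha,\nabla(T_ru)^\beta\rangle-\delta^{\alpha\beta}\bigr)$ to vanish, whereas condition (2) of the splitting definition is an $L^1$ bound on the absolute deviation over $B_r(p)$; the scale-one $L^1$ bound deteriorates by the volume ratio $\vol(B_1)/\vol(B_r)$ and does not pass to small $r$. Recovering both the $L^1$ bound and the gradient bound at scale $r$ is the actual content of the proofs in \cite{CJN21,HH22}: one argues by induction (or contradiction) over dyadic scales, proves $T_rT_{2r}^{-1}\approx I$ so that errors telescope, re-derives the pointwise gradient bound at each scale from harmonicity via Bochner and maximal-function estimates rather than from the scale-one sup bound, and uses the non-$(\eta,k+1)$-Euclidean hypothesis at \emph{each} scale to rule out degeneration of the renormalized map in a limit. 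Your sketch defers exactly these steps (as you acknowledge in the last paragraph), so the proposal does not constitute a proof.
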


\section{Proof of Theorem \ref{MainThm}}\label{ProfMain}

First, we observe the following fact of Euclidean geometry.

\begin{lemma}\label{PolarnessLemma}
	Let $(\R^k\times X,(0^k,x^*))$ be a metric product space of $\R^k$ and a proper length space $X$, satisfying that, $X$ contains no line, and any isometry of $X$ fixes $x^*$, and let $G$ be a nilpotent closed subgroup of isometries of $\R^k\times X$. If there is a tangent cone of the quotient space $(Y,p):=(\R^k\times X,\bar p)/G$ at $p$ containing no line, where $\bar p=(0^k,x^*)$, then the orbit $G(\bar p)$ is isometric to $\R^s$ for some integer $s\in[0,k]$. 
\end{lemma}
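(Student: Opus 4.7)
The plan is to use the product splitting of isometries to reduce the orbit to one on the Euclidean factor, analyze the resulting closed nilpotent subgroup of $\Isom(\R^k)$, and invoke the no-line tangent-cone hypothesis to rule out compact and non-affine components of the orbit.

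Since $X$ is $\mathrm{RCD}(0,n-k)$ and contains no line, the splitting $\R^k\times X$ is canonical: every isometry of the product decomposes uniquely as $g=(g_1,g_2)$ with $g_1\in\Isom(\R^k)$ and $g_2\in\Isom(X)$. Because $g_1$ automatically preserves Lebesgue measure, measure-preservation of $g$ on the product forces $g_2$ to be measure-preserving, and by hypothesis $g_2(x^*)=x^*$. Writing $G_1\le\Isom(\R^k)$ for the projection of $G$, one obtains $G(\bar p)=G_1(0^k)\times\{x^*\}$, which as a subspace of $\R^k\times X$ is isometric to $G_1(0^k)\subset\R^k$. The lemma thus reduces to proving that $G_1(0^k)$ (or its closure) is an affine subspace of $\R^k$.

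The group $G_1$ is nilpotent, and a Lie-algebra calculation on $\Isom(\R^k)=\R^k\rtimes O(k)$ -- using that any nilpotent subalgebra of the compact $\mathfrak{so}(k)$ is abelian, and that the rotational part must act trivially on the translational part for brackets to vanish -- identifies $\overline{G_1}^0=\R^a\oplus T^m$, where $\R^a$ translates along a linear subspace $V\subset\R^k$ and $T^m$ is a torus of rotations around some center $c\in\R^k$ fixing $V$ pointwise. Hence the orbit decomposes as $\overline{G_1}^0(0^k)=V\oplus(T^m\cdot 0^k)$, where $T^m\cdot 0^k\subset V^\perp$ is a compact torus of dimension $m'\ge 0$. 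I claim the no-line hypothesis forces $m'=0$ and forces the discrete cosets of $\overline{G_1}/\overline{G_1}^0$ to preserve $V$, from which $G_1(0^k)=V\cong\R^s$ with $s=a\in[0,k]$. Realizing the hypothesized tangent cone $(Y_\infty,p_\infty)$ at $p$ as an equivariant Gromov--Hausdorff limit
\[
(r_i^{-1}(\R^k\times X),\bar p,G)\longrightarrow(\R^k\times T_{x^*}X,0,G_\infty),
\]
so that $Y_\infty=(\R^k\times T_{x^*}X)/G_\infty$, a slice-type analysis shows that if $m'>0$ then the radial direction in $\R^k$ pointing from $0^k$ toward $c$ is transverse to the torus orbit and fixed by the linear isotropy of $G_{\bar p}$; hence it descends to a line in $Y_\infty$, contradicting the hypothesis. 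An analogous argument eliminates any discrete coset of $\overline{G_1}/\overline{G_1}^0$ whose translation component escapes $V$.

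The main obstacle is exactly this slice/blow-up analysis in the possibly singular RCD setting: rigorously tracking how a compact torus direction of the orbit, or a non-affine discrete offset, produces a surviving $\R$-factor in the tangent cone at $p$ under the equivariant limit. Once this is in hand, $G_1(0^k)$ being an affine subspace follows from the structural decomposition of $\overline{G_1}$, and the conclusion $G(\bar p)\cong\R^s$ with $s=a\in[0,k]$ is immediate.
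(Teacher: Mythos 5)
There is a genuine gap, and it sits exactly at the heart of the lemma. First, your structural claim is not correct as stated: nilpotency and closedness alone do not give $\overline{G_1}^0=\R^a\oplus T^m$ with $\R^a$ pure translations along $V$ and $T^m$ rotations about a common center. A one-parameter screw-motion group in $\Isom(\R^3)$ (rotation by $\theta t$ about the $z$-axis composed with translation by $t e_3$) is closed, connected and abelian, but is not of this form, and its orbits off the axis are helices, not sets of the shape $V\oplus(T^m\cdot 0^k)$. The Lie-algebra computation you invoke only shows that the rotational parts annihilate the subspace of \emph{pure} translations $\mathfrak g\cap\R^k$; it does not force the rotational part of a mixed element $(A,v)$ to kill its translational part. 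So the decomposition you take as a starting point can only be extracted \emph{after} the no-line hypothesis is used, not before.

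Second, and more seriously, the step where the hypothesis actually enters --- showing that a compact orbit direction, or a non-affine discrete coset, would force a line in a tangent cone of the quotient at $p$ --- is precisely what you defer as ``the main obstacle,'' so the substantive content of the lemma is left unproved. For comparison, the paper never blows up $(Y,p)$ equivariantly: it pushes the no-line condition down through the canonical submetry $(Y,p)\to(\R^k/G,\bar 0^k)$, reducing to $G\le\Isom(\R^k)$; then (i) it uses centrality of $G_0$ in the closed nilpotent $G$ together with the slice theorem to show $G_0(0^k)$ equals the fixed subspace $T=\cap_{A\in H}\ker(I-A)$ of the isotropy $H$; and (ii) it controls the discrete part via the normalizer chain $H_0=\spa{G_0,H}$, $H_{i+1}=N_G(H_i)$, observing that every isometry of $\R^k/\spa{G_0,H}$ fixes the base point, so each submetry in the chain is an isometry and $G(0^k)\subset T$. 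Step (ii) is where group-level nilpotency is indispensable: the paper's remark about the infinite dihedral group acting on $\R^1$ (quotient $[0,\tfrac12]$, orbit the integers) shows the discrete cosets cannot be dismissed by ``an analogous argument,'' whereas your sketch uses nilpotency only through the Lie algebra of the identity component. Until you supply the argument deriving the vanishing of the compact/torus directions and the affineness of the discrete part from the no-line hypothesis (e.g., along the lines of the slice theorem plus normalizer chain above), the proof is incomplete.
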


\begin{remark}
	
	If we only prove the finite generation of $\pi_1(M)$ as stated in Theorem \ref{MainThm}, then assuming that $G$ is abelian in Lemma \ref{PolarnessLemma} is sufficient, significantly simplifying the proof of the lemma. The case where $G$ is nilpotent in Lemma \ref{PolarnessLemma} is employed to establish that $\pi_1(M)$ in Theorem \ref{MainThm} is virtually abelian and prove Theorem \ref{MisalsmotkPolar}.
\end{remark}	
\begin{remark}
	The nilpotent assumption of $G$ is indispensable in Lemma \ref{PolarnessLemma}. For example, let $(\R^1,0)$ be the one-dimensional Euclidean space and $G$ be the subgroup of isometries of $\R^1$ generated by reflection about the origin and the translation by $1$, which is not nilpotent. Then the quotient space $(\R^1,0)/G$ is isometric to the interval $([0,\frac12],0)$, whose tangent cone at $0$ contains no line. However, the orbit $G(0)$ is the set of integers.

\end{remark}

\begin{remark}\label{PoleRemark}
	If we assume $X$ has a pole at $x^*$ in Lemma \ref{PolarnessLemma}, then we conclude that $(Y,p)$ has a pole at $p$. This fact is used in the proof of Theorem \ref{MisalsmotkPolar}.
\end{remark}

\begin{proof}
By the condition that $X$ contains no line and any isometry of $X$ fixes $x^*$, it yields that the isometry group, $\Isom(X)$, is compact and $\Isom(\R^k\times X)=\Isom(\R^k)\times\Isom(X)$. So $G$ induces a (not necessary effective) close isometric action on $\R^k=\R^k\times \{x^*\}$. Hence $G(\bar p)$ is isometric to $G(0^k)$. And since we have a canonical submetry $(Y,p)\to(\R^k/G,\bar 0^k)$, induced from the projection $(\R^k\times X,\bar p)\to(\R^k,0^k)$, applying the condition that there is a tangent cone of $Y$ at $p$ containing no line, it yields that the tangent cone of $\R^k/G$ at $\bar 0^k$ contains no line. Therefore it suffices to verify the case where $X$ is a single point space. That is, 

\begin{sublemma}
	Let $G$ be a closed nilpotent subgroup of $\Isom(\R^k)$. If the tangent cone of $\R^k/G$ at $\bar 0^k$ contains no line, where $\bar 0^k$ is the image of $0^k$ under the quotient map $\R^k\to\R^k/G$, then the orbit $G(0^k)$ is isometric to $\R^s$ for some integer $s\in[0,k]$. Consequently, $\R^k/G$ has a pole at $\bar 0^k$.     
\end{sublemma}

	Let $H:=G_{0^k}$ be the isotropy group of $G$ at $0^k$, which is equivalent to that $H$ is a subgroup of $O(k)$. Since $G$ is a closed nilpotent subgroup of $\Isom(\R^k)$, the identity component $G_0$ must be central in $G$ (see \cite[Lemma 1.4]{Pan22b}). Therefore, for any $A\in H\subset O(k)$, $f\in G_0$, if $A(x)=x$, then $A(f(x))=f(A(x))=f(x)$. That is, if $x\in\ker(I-A)$, then $f(x)\in\ker(I-A)$. Let $T:=\cap_{A\in H}\ker(I-A)$ which is of course a linear subspace, $\R^s$, of $\R^k$. By the discussion above, $G_0$ preserves $T$ and $H$ acts on $T$ trivially, and the effective action induced by $G_0$ on $T$ has no isotropy at $0^k\in T$. By the condition that the tangent cone of $\R^k/G$ at $\bar 0^k$ contains no line, together with the slice theorem, we obtain that $G_0(0^k)=T$.

	What left is to verify $G(0^k)\subset T$. Consider the projection $(\R^k,0^k)\to(\R^k/\spa{G_0,H},\hat 0^k)$, where $\spa{G_0,H}$ is the subgroup generated by $G_0,H$. Observe that the tangent cone of $\R^k/\spa{G_0,H}$ at $\hat 0^k$ is isometric to that of $\R^k/G$ at $\bar 0^k$, since the limits of $\spa{G_0,H}$-action and $G$-action coincide when blowing up $\R^k$ at $0^k$. Consequently, the tangent cone of $\R^k/\spa{G_0,H}$ at $\hat 0^k$ contains no line. Note that by the fact $\spa{G_0,H}(0^k)=T$ which is a linear subspace, $\hat 0^k$ is a pole of $\R^k/\spa{G_0,H}$. Now we conclude that any isometry of $\R^k/\spa{G_0,H}$ fixes $\hat 0^k$.  
	
	Let $H_0:=\spa{G_0,H}$, and $H_{i+1}:=N_G(H_i)$ be the normalizer of $H_i$ in $G$. Since $G$ is nilpotent, there exists a finite $l\ge0$ such that $H_l=G$. Thus we have a series of submetries 
	
	\begin{equation*}\label{Submetries}
		(\R^k/\spa{G_0,H},\hat 0^k )=(\R^k/H_0,p_0)\to(\R^k/H_1,p_1)\to...\to(\R^k/H_l,p_l)=(\R^k/G,\bar 0^k).
	\end{equation*}

By an induction on $i=0,1,2,...,l-1$, we obtain that any isometry of $\R^k/H_i$ fixes $p_i$. In particular, $H_{i+1}/H_i$ fixes the basic point $p_i\in\R^k/H_i$, which implies $H_{i+1}(0^k)\subset H_i(0^k)$. Consequently, $G(0^k)=H_l(0^k)\subset H_{l-1}(0^k)\subset...\subset H_0(0^k)= G_0(0^k)=T$, which completes the proof.

\end{proof}
	A byproduct of Lemma \ref{PolarnessLemma} is the following. In order not to interrupt the proof of Theorem \ref{MainThm}, we have arranged its proof in the Appendix \ref{SubsectionByproduct}.
\begin{corollary}\label{Byproduct}
	Let $(M,p)$ be a complete pointed open $n$-manifold with nonnegative Ricci curvature and nilpotent fundamental group. If $\limsup_{r\to\infty}\diam (\partial B_r(p))<2r$ (respect to extrinsic distance) and the universal cover $\tilde M$ has Euclidean volume growth, i.e., for $\tilde p\in\tilde M$, $\liminf_{r\to\infty}r^{-n}\vol(B_r(\tilde p))>0$, then $\pi_1(M)$ is finitely generated and virtually abelian.
\end{corollary}

The second key observation claims that if $(\tilde M,\tilde p)$ with nonnegative Ricci curvature satisfies Definition \ref{DefAlmostk-Polar} (1), then the number of almost Euclidean factors of $B_r(p)$, in the Gromov-Hausdorff sense, is non-increasing respect to $r\ge R$.

\begin{lemma}\label{StablityLemma}
	For any integer $n\ge 2$ and small $\eta,\epsilon>0$, there exists a constant $\delta=\delta(n,\eta,\epsilon)$, satisfying the following properties. Let $(M,p)$ be a pointed open $n$-manifold with nonnegative Ricci curvature and $(\tilde M,\tilde p)\to(M,p)$ be the Riemannian universal cover. If there exist constants $R>0$, and integer $k\in[0,n]$, such that, for any $r\ge R$, $B_r(\tilde p)$ is $(\delta,k)$-Euclidean and not $(\eta,k+1)$-Euclidean, and there exist $R_0> R$ and integer $k'\in[0,k]$ such that $B_{R_0}(p)$ is $(\delta,k')$-Euclidean, then for any $r\in [R,R_0]$, $B_r(p)$ is $(\epsilon,k')$-Euclidean.
\end{lemma}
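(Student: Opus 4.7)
The plan is to use the transformation theorem on the universal cover. First, by Theorem \ref{SplittingThm} (2), the hypothesis that $B_{R_0}(p)$ is $(\delta,k')$-Euclidean---for $\delta$ small enough depending on a threshold $\delta_0=\delta_0(n,\epsilon)$---produces a $(\delta_0,k')$-splitting map $u:B_{\delta_0 R_0}(p)\to\R^{k'}$ on $M$. I then lift $u$ to the $\Gamma$-invariant map $\tilde u:=u\circ\pi$ on $\tilde M$, where $\Gamma=\pi_1(M,p)$ and $\pi:\tilde M\to M$ is the covering projection; the first task is to verify that $\tilde u$ is also a splitting map on $B_{\delta_0 R_0}(\tilde p)$ with comparable constants (this is the main technical step, discussed below).

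With $\tilde u$ at hand, I apply Theorem \ref{TransThm} on $\tilde M$: the hypothesis that $B_r(\tilde p)$ is $(\delta,k)$-Euclidean and not $(\eta,k+1)$-Euclidean for all $r\ge R$ gives exactly the uniform condition required across $r\in[R,\delta_0 R_0]$. After rescaling $\tilde M$ so that the scale $\delta_0 R_0$ becomes the unit scale, the theorem produces for each such $r$ a lower-triangular matrix $T_r$ so that $T_r\tilde u$ is an $(\epsilon,k')$-splitting map on $B_r(\tilde p)$. Since $T_r$ is linear, $T_r\tilde u$ remains $\Gamma$-invariant and descends to $T_r u$ on $B_r(p)\subset M$; once $T_r u$ is seen to be an $(\epsilon,k')$-splitting map on $M$, Theorem \ref{SplittingThm} (1) gives that $B_r(p)$ is $(\epsilon,k')$-Euclidean for every $r\in[R,\delta_0 R_0]$. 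For the residual interval $r\in[\delta_0 R_0,R_0]$, the conclusion follows from a trivial sub-ball estimate: the $(\delta,k')$-Euclidean condition at scale $R_0$ restricts to $(\delta R_0/r,k')$-Euclideanness at scale $r$, which is at most $(\delta/\delta_0,k')$-Euclidean, hence $(\epsilon,k')$-Euclidean provided $\delta\le\delta_0\epsilon$.

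The main obstacle is justifying that a splitting map on $M$ lifts to a splitting map on $\tilde M$ (and conversely that the transformed map descends) with quantitatively comparable constants. The pointwise Lipschitz gradient bound and harmonicity pass through $\pi$ trivially, since $\pi$ is a local isometry. What is subtler is the $L^1$ integral condition on $|\langle\grad u^\alpha,\grad u^\beta\rangle-\delta^{\alpha\beta}|$: its pullback to $\tilde M$ is $\Gamma$-invariant, but the volume element on $B_r(\tilde p)\subset\tilde M$ relates to the one on $B_r(p)\subset M$ by the covering multiplicity $N(x):=\#(\pi^{-1}(x)\cap B_r(\tilde p))$, which is not a priori uniformly bounded. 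I expect this to be handled by invoking the near-product structure $B_r(\tilde p)\approx\R^k\times X$ on the universal cover: under this structure, standard Cheeger-Colding/Bochner estimates force $\tilde u$ to be $C^0$-close to an affine function of the Euclidean coordinates, so the weighted and unweighted averages of the orthonormality defect differ only by terms that vanish with $\delta$, allowing the integral condition to be transferred in both directions with negligible loss.
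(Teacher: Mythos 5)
Your overall architecture coincides with the paper's: produce a splitting map on $M$ at a scale comparable to $R_0$ via Theorem \ref{SplittingThm} (2) (the paper uses the scale $\sqrt\delta R_0$ rather than a fixed threshold scale, but this is the same parameter chase), lift it to $\tilde M$, run the transformation theorem (Theorem \ref{TransThm}) on the cover over $[R,\sqrt\delta R_0]$, push $T_r\tilde u$ back down, conclude with Theorem \ref{SplittingThm} (1), and dispose of the top range $[c\sqrt\delta R_0,R_0]$ by restricting the Gromov--Hausdorff approximation to sub-balls. The one step you yourself single out as the main technical point is, however, exactly where your argument has a genuine gap.

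The transfer of the $L^1$ orthonormality condition between $B_r(p)$ and $B_r(\tilde p)$ cannot be justified the way you propose. Your claim is that since $\tilde u$ is $C^0$-close to an affine function, ``the weighted and unweighted averages of the orthonormality defect differ only by terms that vanish with $\delta$.'' But the defect $|\langle\grad u^\alpha,\grad u^\beta\rangle-\delta^{\alpha\beta}|$ is a gradient-level quantity which is only controlled in an integral sense; $C^0$-closeness of $\tilde u$ to an affine map gives no pointwise bound on it, and a small unweighted $L^1$ average does not imply a small average against the multiplicity weight $N(x)=\#(\pi^{-1}(x)\cap B_r(\tilde p))$: the defect may concentrate precisely on a small set where $N$ is huge, and $N$ is not a priori bounded (indeed here $\pi_1$ may be infinitely generated). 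What is actually needed is quantitative control of the weight, obtained from orbit counting and relative volume comparison, which compares the averaged integral on $B_r(\tilde p)$ with the averaged integral on a slightly larger ball in $M$ (and vice versa for the descent of $T_r\tilde u$, where the same multiplicity problem reappears and forces a loss of a definite factor in the radius). This two-way comparison with only a $C(n)$ loss is precisely the content of \cite[Lemma 5.3]{HH22}, which the paper invokes twice (lifting $u$, then descending $T_r\tilde u$ on $[R,\frac13\sqrt\delta R_0]$). Alternatively, one could avoid the weighted/unweighted comparison altogether by re-running the Cheeger--Colding functional-splitting argument directly on the cover, but then you must first show that $u\circ\pi$ is $C^0$-close to genuine Euclidean coordinates of the near-splitting of $B_r(\tilde p)$ (whose Euclidean factor has dimension $k\ge k'$ and is a priori not aligned with the quotient's factor), which is itself a nontrivial argument you have not supplied. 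As written, the lifting/descending step is asserted rather than proved, and the heuristic offered for it is incorrect; filling it requires either the covering-multiplicity estimate of \cite[Lemma 5.3]{HH22} or an equivalent substitute.
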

\begin{proof}
	 Without loss of generality, we may assume $k'\ge1$ and $\delta\ll1$. By restricting a $\delta R_0$-Gromov-Hausdorff approximation from $B_{R_0}(p)\to B_{R_0}(0^{k'})$ to $B_r(p)$ and slightly modifying the image, one can observe that for any $r\in[\frac16\sqrt\delta R_0,R_0]$, $B_r(p)$ is $(60\sqrt\delta,k')$-Euclidean. In the following, we always assume $60\sqrt\delta<\epsilon$. Hence we just need to verify the case that $R\notin[\frac16\sqrt\delta R_0,R_0]$, that is, $R_0\ge \frac6{\sqrt\delta}R$. 
	 
	 By a scaling of the distance on $M$ by $(\sqrt\delta R_0)^{-1}$, the condition that $B_{R_0}(p)$ is $(\delta,k')$-Euclidean is equivalent to
	 $d_{GH}(B_{\sqrt\delta^{-1}}(p,(\sqrt\delta R_0)^{-1}M),B_{\sqrt\delta^{-1}}(0^{k'},x^*))\le\sqrt\delta$ for some $(\R^k\times X,0^k,x^*)$. By Cheeger-Colding's almost splitting theorem (Theorem \ref{SplittingThm}), there exists a small constant $\delta(n)>0$ such that if $\delta\le\delta(n)$, then there is a $(\Psi(\delta|n),k')$-splitting map, $\bar u:B_{1}(p,(\sqrt\delta R_0)^{-1}M)\to\R^{k'}$. Let $u:=\sqrt\delta R_0\bar u$. Note that $u:B_{\sqrt\delta R_0}(p)\to\R^{k'}$ is also a $(\Psi(\delta|n),k')$-splitting map. 
	 
	 By a covering lemma, \cite[Lemma 5.3]{HH22}, the lifting map $\tilde u:=u\circ \pi:B_{\sqrt\delta R_0}(\tilde p)\to\R^{k'}$ is a $(C(n)\Psi(\delta|n),k')$-splitting map, where $\pi:(\tilde M,\tilde p)\to (M,p)$ is the covering projection. Now by the condition that for any $r\in[R,\sqrt\delta R_0]$, $B_r(\tilde p)$ is $(\delta,k)$-Euclidean and not $(\eta,k+1)$-Euclidean, we apply the transformation theorem (Theorem \ref{TransThm}), which concludes that, for any $r\in[R,\sqrt\delta R_0]$, there exists a $k'\times k'$ lower triangle matrix $T_r$ with positive diagonal entries such that $T_r\tilde u:B_r(\tilde p)\to\R^{k'}$ is a $(\Psi_1(\delta|n,\eta),k)$-splitting map. By \cite[Lemma 5.3]{HH22} again, for $r\in[R,\frac13\sqrt\delta R_0]$, $T_ru:B_{r}(p)\to\R^{k'}$ is a $(C(n)\Psi_1(\delta|n,\eta),k')$-splitting map. Now applying Cheerger-Colding's almost splitting theorem again, for any $r\in[R,\frac16\sqrt\delta R_0]$, $B_r(p)$ is $(\Psi_2(\delta|n,\eta),k')$-Euclidean. So we just need to choose $\delta$ small enough such that $\Psi_2(\delta|n,\eta)\le\epsilon$ which meets the requirement.

\end{proof}
Now we are ready to prove Theorem \ref{MainThm}.

\begin{proof}[Proof of Theorem \ref{MainThm}]
	By Wilking's reduction (Theorem \ref{WilkingsReduction}), we assume $\pi_1(M)$ is abelian. The proof is argued by contradiction. Suppose that there exist $n,\eta>0,\delta_i\to0$, and a sequence of $n$-manifolds $(M_i,p_i)$ with nonnegative Ricci curvature and abelian fundamental group, satisfying that the universal cover $\tilde M_i$ is $(\delta_i,\eta,k)$-polar at infinity with respect to $\tilde p_i$, and $\pi_1(M_i)$ is not finitely generated. By a scaling, we may assume $R=1$ in Definition \ref*{DefAlmostk-Polar} for any $(\tilde M_i,\tilde p_i)$. For each $i$, fix a Gromov's short basis $\{\gamma_{i,1},\gamma_{i,2},...\}\subset\pi_1(M_i,p_i)$. Let $r_{i,j}=|\gamma_{i,j}|$, the length of shortest presentation of $\gamma_{i,j}$. 
	
	Now for each $i$, by assumption, $\lim_{j\to\infty}r_{i,j}=\infty$. Therefore we can choose integers $1\le j_i^0<j_i^1<...<j_i^{k+1}$ satisfying,
	\begin{equation}\label{BasisLength}
		1<r_{i,j_i^s}<i^{-1}r_{i,j_i^{s+1}},\,s=0,1,...,k.
	\end{equation}
  	Then by conditions, up to a subsequence, for each $s\in\{0,1,...,k+1\}$, we have the following commutative diagram of equivariant Gromov-Hausdorff convergence,
	\begin{equation}\label{Digram1}
		\xymatrix{
			(r_{i,j_i^s}^{-1}\tilde M_i,\tilde p_i,\Gamma_i) \ar[rr]^{GH}\ar[d]_{}&&(\R^k\times X_s,(0^k,x_s^*),G) \ar[d]^{} \\
			(r_{i,j_i^s}^{-1} M_i,p_i)\ar[rr]^{GH}&  & (\R^{t_s}\times Y_s,(0^{t_s},y_s^*)),}
	\end{equation}
	where each $\Gamma_i$ is the deck transformation, $G$ is the limit abelian closed group, and $X_s$ and $Y_s$ contain no line. Note that the $\R^{t_s}$-factor of $\R^{t_s}\times Y_s$ can be lifted to an $\R^{t_s}$-factor of $\R^k\times X_s$, on which $G$ acts trivially. Specially, $t_s\le k$.

	We claim: $t_{s}+1\le t_{s-1}$ for $s=1,...,k+1$. Assuming this claim, we conclude $t_0\ge k+1$ which is impossible. Hence what remain is to verify the claim. 
	
	The remaining discussion is for a fixed $s\in\{1,...,k+1\}$. 
	
	By condition (2) of Definition \ref{DefAlmostk-Polar}, $(0^k,x_s^*)$ is a pole of $\R^k\times X_s$. Hence any isometry of $X_s$ fixes $x^*_s$. And by Lemma \ref{Nonconnectness}, the obit $G((0^k,x_s^*))$ is not connected. Thus we can apply the contrapositivity of Lemma \ref{PolarnessLemma} to the projection $(\R^{k-t_s}\times X_s^*,(0^{k},x_s^*),G)\to(Y_s,y_s^*)$, we conclude that every tangent cone of $Y_s$ at $y_s^*$ splits an $\R$-factor. We choose $\lambda_1,\lambda_2,...\lambda_l,...,\to\infty$ such that $(\lambda_lY_s,y_s^*)\GH(\R\times \bar Y,\bar y)$. For each integer $l>0$, choose integer $i_l$ satisfying that ${i_l}>\lambda_l$ and $$d_{GH}((r_{i_l,j_{i_l}^s}^{-1}M_{i_l},p_{i_l}),(\R^{t_s}\times Y_s,(0^{t_s},y_s^*)))<l^{-1}\lambda_l^{-1}.$$

 	Putting $j_l^s:=j_{i_l}^s$, we have 
	$$(r_{i_l,j_l^s}^{-1}\lambda_lM_{i_l},p_{i_l})\GH(\R^{t_s+1}\times \bar Y,(0^{t_s+1},\bar y)).$$ Specially for each $L\ge1$, $B_{Lr_{i_l,j_l^s}\lambda_l^{-1}}(p_{i_l})\subset M_{i_l}$ is $(\epsilon_l(L),t_s+1)$-Euclidean for some $\epsilon_l(L)\to0$ as $l\to\infty$.
	
	Noting that we have,
	$$r_{i_l,j_l^s}^{-1}\lambda_l<i_l^{-1}r_{i_l,j_l^{s-1}}^{-1}\lambda_l<r_{i_l,j_l^{s-1}}^{-1}<1,$$ where the first inequality is by (\ref{BasisLength}) and the second one is by ${i_l}>\lambda_l$. Using the above inequality, we apply Lemma \ref{StablityLemma} to each $(M_{i_l},p_{i_l})$ with $R_0:=Lr_{i_l,j_l^s}\lambda_l^{-1}>Lr_{i_l,j_l^{s-1}}>L=:R$, which implies $B_{Lr_{i_l,j_l^{s-1}}}(p_{i_l})\subset M_{i_l}$ is $(\Psi(\epsilon_l(L),\delta_{i_l}|n,\eta),t_s+1)$-Euclidean. So,

	 $$(r_{i_l,j_l^{s-1}}^{-1}M_{i_l},p_{i_l})\GH(\R^{t_s+1}\times \bar Y',(0^{t_s+1},\bar y')),$$ for some length space $(\bar Y',\bar y')$. Comparing the convergent sequence above and the one in diagram (\ref{Digram1}), we obtain that $\R^{t_{s-1}}\times Y_{s-1}$ is isometric to $\R^{t_s+1}\times \bar Y'$. Combining the fact that $Y_{s-1}$ does not split any $\R$-factor, we conclude the claim $t_s+1\le t_{s-1}$, which finishes the proof. 
	
\end{proof}

\begin{proof}[Proof of Corollary \ref{Largek}]\quad
	Put $\bar\eta(n):=\frac12d_{GH}(B_1(0^n),B_1(0^{n+1}))$.

	(1) We claim that, for any $\epsilon>0$, there exists an $\eta(n,v,\epsilon)>0$, such that, if $B_{r_i}(\tilde p)$ is $(\eta(n,v,\epsilon),n-1)$-Euclidean for some $r_i\to\infty$, then $B_r(\tilde p)$ is $(\epsilon,n)$-Euclidean for any $r>0$. 
	
	The claim is based on the combination of two theorems: the volume convergence theorem \cite{Co97}, and the almost metric cone implies almost volume cone theorem \cite{CC96}. Suppose the opposite; let $(\tilde M_i,\tilde p_i)$ be a sequence that contradicts this claim. Then each $(\tilde M_i,\tilde p_i)$ has a tangent cone at infinity $(Y_i,y_i)$ that is a metric cone with the vertex $y_i$ satisfying that $B_1(y_i)$ is $(\eta_i,n-1)$-Euclidean for some $\eta_i\to0$ and $\vol(B_1(y_i))\ge v$. Up to a subsequence, we may assume $(Y_i,y_i)\GH(Y,y)$ where $Y$ is a metric cone with vertex $y$ which splits $\R^{n-1}$. According to the codimension $2$ theorem of singularity of non-collapsed Ricci limit space \cite{CC97}, $Y$ is isometric to $\R^n$. Therefore, $\vol(B_1(y_i))\ge (1-\epsilon_i)\vol(B_1(0^n))$ for some $\epsilon_i\to0$. Hence we obtain $B_r(\tilde p_i)$ is $(\Psi(\epsilon_i|n),n)$-Euclidean for each $r>0$, which yields a contradiction. The claim holds.
	
	Let $\epsilon(n)$ be determined by Theorem \ref{MainThm} with respect to $n,\bar\eta(n)$. By the above claim, putting $\eta(n,v):=\eta(n,v,\epsilon(n))$, if $B_{r_i}(\tilde p)$ is $(\eta(n,v),n-1)$-Euclidean for some $r_i\to\infty$, then $\tilde M$ satisfies $(\epsilon(n),\bar\eta(n),n)$-polar at infinity, which implies $\pi_1(M)$ is finitely generated.  
	
	Thus, we can assume that $B_r(\tilde p)$ is not $(\eta(n,v),n-1)$-Euclidean for any sufficiently large $r>0$. Combining this with the conditions, we obtain that $\tilde M$ satisfies $(\delta_1,\eta(n,v),n-2)$-polar at infinity. Hence if we choose $\delta_1=\delta_1(n,v)$ determined by Theorem \ref{MainThm} with respect to $n-1,\eta(n,v)$, then $\pi_1(M)$ is finitely generated.
	
	(2) Let $\eta_0(n)$ be determined by Theorem \ref{MainThm} with respect to $n,\bar \eta(n)$. And by the quantitative volume rigidity theorem \cite[Theorem 0.8]{Co97}, we can choose small $\eta_1(n)$ satisfying that if $B_{r_i}(\tilde p)$ is $(\eta_1(n),n)$-Euclidean for some $r_i\to\infty$, then $B_r(\tilde p)$ is $(\eta_0(n),n)$-Euclidean for any $r>0$.
	
	Thus if $B_{r_i}(\tilde p)$ is $(\eta_1(n),n)$-Euclidean for some $r_i\to\infty$, then $\pi_1(M)$ is finitely generated by Theorem \ref{MainThm}. Therefore, we can assume $B_{r}(\tilde p)$ is not $(\eta_1(n),n)$-Euclidean for any sufficiently large $r$. Moreover, by the condition, it is direct to verify that for any sufficiently large $r>0$, and any $x\in\partial B_r(\tilde p)$, $d(x,\partial B_{2r}(\tilde p))\le(1+12\delta_2 )r$. Combining these, we conclude that $\tilde M$ satisfies $(12\delta_2,\eta_1(n),n-1)$-polar at infinity. So we can then choose $12\delta_2$ to be less than the constant $\delta(n)$ determined by Theorem \ref{MainThm} with respect to $n,\eta_1(n)$. This implies that $\pi_1(M)$ is finitely generated. 
	
\end{proof}

\section{Proof of Theorem \ref{MisalsmotkPolar} and \ref{MaximalVolGrowth}}\label{ProfotherThms}
	
	\begin{proof}[Proof of Theorem \ref{MisalsmotkPolar}]

		By Theorem \ref{MainThm}, $\pi_1(M)$ is finitely generated. Combining this with Kapovitch-Wilking's Magulis lemma \cite{KW11}, we can assert the existence of a finite cover $\hat M$ of $M$ with index $\le C(n)$ and whose fundamental group is nilpotent with nilpotency step $\le n$, i.e., a lower central series of length $\le n$. It suffices to prove the conclusion for $\hat M$. Therefore, we may assume below that $\pi_1(M)$ is nilpotent with nilpotency step $\le n$.

		Argue by contradiction. Let $(M_i,p_i)$ be a sequence of open $n$-manifolds with nonnegative Ricci curvature satisfying that, for each $i$, $\pi_1(M_i)$ is nilpotent with nilpotency step $\le n$, and the Riemannian universal cover $(\tilde M,\tilde p_i)\to (M_i,p_i)$ is $(\delta_i,\eta,k)$-polar at infinity for $\delta_i\to0$ (by a scaling, we assume $R=1$ in Definition \ref{DefAlmostk-Polar}). However, there exists a tangent cone at infinity $(Y_i,y_i^*)$, of $(M_i,p_i)$, such that for any length space $(Z,z)$ with a pole at $z$, 
		\begin{equation}\label{GaptoPolar}
			d_{GH}((Y_i,y_i^*),(Z,z))>\epsilon.
		\end{equation}
		
		Suppose $(\tilde Y_i,\tilde y_i^*,G_i)$ is an equivariant tangent cone at infinity of $(\tilde M_i,\tilde p_i,\pi_1(M_i))$, arising from the same scaling sequence as $(Y_i,y_i^*)$, i.e., there exists $r_j\to0$ such that $(r_jM_i,p_i)\GH(Y_i,y_i^*)$ and $(r_j\tilde M_i,\tilde p_i,\pi_1(M_i))\GH(\tilde Y_i,\tilde y_i^*,G_i)$. Up to a subsequence, by the assumption of almost $k$-polar at infinity, we have the following commutative diagram,
		
		\begin{equation*}\label{Diag3}
			\xymatrix{
				(\tilde Y_i,\tilde y_i^*,G_i) \ar[rr]^{GH}\ar[d]_{}&&(\R^k\times \tilde X,(0^k,\tilde x),G) \ar[d]^{} \\
				(Y_i,y_i^*)\ar[rr]^{GH}&  & (\R^{t}\times X,(0^{t},x)),}
		\end{equation*}
		where $\tilde X,X$ contain no line and $\tilde X$ has a pole at $\tilde x$. By the assumption that $\pi_1(M_i)$ are all nilpotent with nilpotency step $\le n$, so are $G_i$ and $G$. 
		
		We claim, any tangent cone of $X$ at $x$ splits no $\R$-factor. Otherwise, for any $\epsilon'>0$, there exists small $r'>0$, such that for any large $i$, $B_{r'}(y_i^*)$ is $(\delta_i'+\epsilon',t+1)$-Euclidean for some $\delta_i'\to0$. Then fixed such an $i$, there exists $r_j\to\infty$, such that $B_{r_j}(p_i)$ is $(2(\delta_i'+\epsilon'),t+1)$-Euclidean. Now applying Lemma \ref{StablityLemma} to $(M_i,p_i)$ for $R_0=r_j$ and $R=1$, it yields that for any $r\ge 1$, $B_r(p_i)$ is $(\Psi(\delta_i'+\epsilon'|n,\eta),t+1)$-Euclidean. Since $\epsilon'$ is arbitrary, we conclude that $\R^t\times X$ splits $\R^{t+1}$-factor, which is a contradiction to that $X$ contains no line. Hence our claim follows.
		
		Now we apply Lemma \ref{PolarnessLemma} and Remark \ref{PoleRemark} to $(\R^{k-t}\times\tilde X,(0^{k-t},\tilde x),G)\to(X,x)$ to yield that $X$ has a pole at $x$. This contradicts to assumption (\ref{GaptoPolar}).

	\end{proof}

To prove Theorem \ref{MaximalVolGrowth}, we need the following two lemmas. The first one, which is implied in \cite{CFG}, is a classical fact in collapsing theory. For the reader's convenience, we provide a sketched proof in Appendix \ref{Subsectioncontracttoinfranil}.

\begin{lemma}\label{LocalConctraction}
	For $n,\epsilon>0$, there exist universal positive constants $r=r(n)\in(0,1)$ and $\delta=(n,\epsilon)$ to the following effect. Suppose that a pointed $n$-manifolds $(M,p)$ satisfies that the sectional curvature is bounded, $|\sec_{M}|\le 1$, $\overline{B_{8}(p)}$ is compact and $\partial B_8(p)$ is not empty. If $\vol(B_1(p))<\delta$, then there exist an embedding infra-nilmanifold $F\subset M$ containing $p$, and an open regions $U\subset M$, satisfying that, $\diam F\in(0,\epsilon)$, $B_{e^{-\epsilon}r}(p)\subset U\subset B_{e^\epsilon r}(p)$, and $U$ is diffeomorphic to the normal bundle of $F$. 
\end{lemma}

The second tool we require is the following theorem regarding local Ricci flow.

\begin{theorem}\cite{HW22,HRW20}\label{LocalSmoothing}
	For any $\alpha,\rho\in(0,1]$ there are $\delta=\delta(n,\alpha,\rho),\epsilon=\epsilon(n,\alpha,\rho)\in(0,1)$ such that if $(M,g)$ is a complete manifold with $\Ric\ge-(n-1)$, and $p\in M$ satisfies $d_{GH}(B_{2\rho}(\tilde p),B_{2\rho}(0^n))\le2\rho\delta$, where $\tilde p$ is a lift of $p$ in the Riemannian universal covering space of $B_{2\rho}(p)$, then there is a Ricci flow solution with initial data $(B_\rho(p,g),g)$, that exists up to time $\epsilon^2$ and for any $t\in(0,\epsilon^2]$, the curvature satisfies $$\sup_{B_\rho(p,g)}\|\mathrm{Rm}_{g(t)}\|_{g(t)}\le\frac{\alpha}{t}.$$
	
\end{theorem}

\begin{proof}[Proof of Theorem \ref{MaximalVolGrowth}]
		
	By Cheeger-Colding's Reifenberg type theorem (\cite{CC97}), $\tilde M$ is diffeomorphic to $\mathbb{R}^n$. Consequently, $\pi_1(M)$ is torsion-free; otherwise, the $\pi_1(M)$-action on $\tilde M\approx\R^n$ cannot be free. This, combined with Li's result (\cite{Li86}), implies
	\begin{equation}\label{CollapsingofM}
		 \lim_{r\to\infty}\frac{\mathrm{vol}(B_{r}(p))}{r^n}=0
	\end{equation}

	Again by Cheeger-Colding's result (\cite{CC96}), for $\delta$ sufficiently small depending on $n$, $d_{GH}(B_r(\tilde p),B_r(0^n)) \leq r\Psi(\delta|n)$ for any $r>0$. Hence Theorem \ref{MisalsmotkPolar} can be applied to $M$, which shows that, for $x\in M$ outside a compact subset, $\frac{e_p(x)}{r_p(x)}\le\Psi(\delta|n)$, where $e_p(x):=r_p(x)+d(x,\partial B_{2r_p(x)}(p))-2r_p(x)$, and $r_p(x):=d(x,p)$. Now we can apply \cite[Theorem 1.5]{Hua22} for $\alpha=1$ to conclude that there is a bounded open subset $V\ni p$ such that $M$ deformation retracts to $\overline V$.

	Fixing a sufficiently large $r$, which will be determined later, and given any small $\alpha\in(0,1)$, applying Theorem \ref{LocalSmoothing} to the $2$-ball $B_{2}(p,r^{-2}g)$ with respect to the metric $r^{-2}g$, where $g$ is the original metric tensor on $M$, if $\delta$ is small depending on $n$ and $\alpha$, then there exists a (non-complete) Ricci flow solution $g(t)$ with initial data $(B_{1}(p,r^{-2}g),r^{-2}g)$ existing for at least up to time $\epsilon(n,\alpha)^2$, and satisfying for all $t \in (0,\epsilon(n,\alpha)^2]$:
	$$
	\sup_{B_1(p,r^{-2}g)}\|\mathrm{Rm}_{g(t)}\|_{g(t)} \leq \frac{\alpha}{t}.
	$$

	Put $\bar g:=g(\epsilon(n,\alpha)^2)$. Using the distance distortion estimate \cite[Lemma 1.11]{HKRX} (see also \cite{HRW20}) and the standard volume evolution of Ricci flow in conjunction with (\ref{CollapsingofM}), for sufficiently small $\epsilon(n,\alpha)$, we have $$B_{0.7}(p,r^{-2}g)\subset B_{0.8}(p,\bar g)\subset B_{0.9}(p,r^{-2}g),$$
	$$\vol_{\bar g}(B_{0.8}(p,\bar g))\to 0,\,\text{as }r\to\infty.$$
	
	Hence for $r$ sufficiently large, up to a scaling, we can apply Lemma \ref{LocalConctraction} to $(B_{0.8}(p,\bar g),p)$ to yield that there is an $s=s(n,\epsilon(n,\alpha))\in(0,0.1)$, and an open region $U$, satisfying that, $B_{0.9s}(p,\bar g)\subset U\subset B_{1.1s}(p,\bar g)$, and $U$ is diffeomorphic to the normal bundle of an embedding infra-nilmanifold $p\in F\subset U$. Again using the distance distortion estimate \cite[Lemma 1.11]{HKRX}, for sufficiently small $\epsilon(n,\alpha)$, we obtain $$B_{0.5sr}(p,g)\subset U\subset B_{2sr}(p,g).$$

	Given that $r$ is sufficiently large to ensure that $\overline{V}\subset B_{0.5sr}(p,g)$, which is in turn contained within $U$, we can deduce that $M$ deformation retracts to $F$ by compositing a deformation retraction from $M$ to $\overline{V}$ with a deformation retraction from $U$ to $F$. 
	
	What is left is to show that $F$ is diffeomorphic to a flat manifold. Note that, $\pi_1(M)$ is virtually abelian (as discussed in Subsection \ref{virabelian}), which implies the same for $\pi_1(F)$. By Malcev's rigidity theorem (\cite[Theorem 3.7]{CFG}), there exists an isomorphism from the universal cover of $F$ to $\R^n$. Hence $\pi_1(F)$ is a subgroup of $\mathrm{Aff}(\R^n)$. Using the classical theorem that any compact subgroup of $GL(\R^n)$ is conjugate to a subgroup of $O(n)$, we conclude that $\pi_1(F)$ is conjugate to a subgroup of $\Isom(\R^n)$. This implies that $F$ is diffeomorphic to a flat manifold, thus completing the proof.

	\end{proof}

\section{appendix}

\subsection{$\pi_1(M)$ is virtually abelian}\label{virabelian}
In this subsection, we relies on the escape rate gap theorem \cite[Theorem A]{Pan22}, which asserts that if an open $n$-manifold $M$ with nonnegative Ricci curvature has escape rate less than some positive constant $\epsilon(n)$, then $\pi_1(M)$ is virtually abelian. 

Let $(M,p)$ be a pointed open manifold with nonnegative Ricci curvature and $G$ a closed subset of the isometry group of $M$. We use $\Omega(M,G)$ to denote all equivariant tangent cones at infinity of $(M,p)$. That is, for any $(X,x,H)\in\Omega(M,G)$, there exists a convergent sequence $(r_i^{-1}M,p,G)\GH(X,x,H)$ for some $r_i\to\infty$ as $i\to\infty$. Firstly, we need the connectness of $\Omega(M,G)$,
\begin{lemma}\label{Connectnessofinfinitycone}
	The set $\Omega(M,G)$ is connected in the following sense. Given $(X_m,x_m,H_m)\in \Omega(M,G)$, $m=0,1$, for any $\epsilon>0$, there exist $(Y_s,y_s,G_s)\in \Omega(M,G)$, $s=0,1,...,S$, such that $(Y_0,y_0,G_0)=(X_0,x_0,H_0)$ and $(Y_S,y_S,G_S)=(X_1,x_1,H_1)$, and $$d_{GH}((Y_s,y_s,H_s),(Y_{s+1},y_{s+1},H_{s+1}))\le\epsilon,\,s=0,1,...,s-1.$$
\end{lemma}
This property is well known for experts. For the reader's convenience, we include a proof here.
\begin{sublemma}\label{WeakStable}
	Let $(M,p)$ be a pointed open manifold with nonnegative Ricci curvature and $G$ a closed subgroup of the isometry group of $M$. Then for any $\epsilon>0$, there exists $R>0$, such that for any $r\ge R$, there exists $(X,x,H)\in\Omega(M,G)$ satisfying that $d_{GH}((r^{-1}M,p,G),(X,x,H))\le\epsilon$.

\end{sublemma}
\begin{proof}
	If not, then there exist $\epsilon_0>0$, $r_i\to\infty$, such that for any $(X,x,H)\in\Omega(M,G)$ satisfying that $d_{GH}((r_i^{-1}M,p,G),(X,x,H))>\epsilon_0$. By the compactness criterion for equivariant Gromov-Hausdorff convergence, this is impossible.
\end{proof}

\begin{proof}[Proof of Lemma \ref{Connectnessofinfinitycone}]
	Let $R>0$ be the constant depending $\frac\epsilon3$ as in Lemma \ref{WeakStable}. And for $m=0,1$, let $r_m>R$ satisfy that $d_{GH}((r_m^{-1}M,p,G),(X_m,x_m,H_m))\le\frac\epsilon3	$ and $r_0<r_1$. For any integer $S>0$ and $s=0,1,...,S$, put $r_s=r_0+\frac{s}{S}(r_1-r_0)$. It is simple to see $$d_{GH}((r_s^{-1}M,p,G),(r_{s+1}^{-1}M,p,G))\le\delta_S,$$ where $\delta_S\to0$ as $S\to\infty$. Now the required conclusion follows from Lemma \ref{WeakStable}.
	
\end{proof}

By Theorem \ref{MainThm}, $\pi_1(M,p)$ is finitely generated. Then according to the Margulis lemma in \cite{KW11}, $\pi_1(M)$ has
a nilpotent subgroup of index $\le C(n)$ with nilpotency step at most $n$. In order to prove $\pi_1(M)$ is virtually abelian, up to a finite cover, we always assume $\pi_1(M)$ is nilpotent with nilpotency step at most $n$ below.

By the escape rate gap theorem in \cite{Pan22}, we need to verify that, for any $\epsilon>0$, there exists $\delta=\delta(n,\eta,\epsilon)$ such that $(M,p)$ satisfying conditions of Theorem \ref{MainThm} has escape rate less than $\epsilon$. Applying \cite[Proposition 4.2]{Pan22}, it suffices to show that, there exists an integer $s\ge0$, such that any $(Y,y,G)\in\Omega(\tilde M,\pi_1(M,p))$, satisfies 
\begin{equation}\label{G-OrbitisR^s}
	d_{GH}((Y,y,G(y)),(\R^s\times X,(0^s,x),\R^s\times\{x\}))\le \Psi(\delta|n,\eta),
\end{equation}
where $(X, x)$ is a length space that depends on $(Y, y)$ (which may contain a line). Firstly we show the following lemma which is a weak version of the above property.
\begin{lemma}\label{sdenpendsonY}
	For a $(Y,y,G)\in\Omega(\tilde M,\pi_1(M,p))$, there exists an integer $s\in[0,n]$ and a length space $(X,x)$ satisfying inequality (\ref{G-OrbitisR^s}).
\end{lemma}

The distinction between Lemma \ref{sdenpendsonY} and the aforementioned property lies in the fact that in Lemma \ref{sdenpendsonY}, $s$ may depend on the choice of $(Y,y,G)$ beforehand. However, the independence of $s$ on the choice of $(Y,y,G)\in\Omega(\tilde M,\pi_1(M,p))$ is inferred from Lemma \ref{Connectnessofinfinitycone} and the observation that $(\R^{s_0}\times X_0,(0^{s_0},x_0),\R^{s_0}\times\{x_0\})$ and $(\R^{s_1}\times X_1,(0^{s_1},x_1),\R^{s_1}\times\{x_1\})$ have a definite positive distance dependent solely on $n$ in pointed Gromov-Hausdorff distance if $s_0\neq s_1$. Therefore, we just need to prove Lemma \ref{sdenpendsonY}.

\begin{proof}[Proof of Lemma \ref{sdenpendsonY}]

	Argue by contradiction. Assume there exist an $\epsilon>0$, and a contradicting sequence $(M_i,p_i)$ satifying that conditions in Theorem \ref{MainThm} for $\delta_i\to0$, and for each integer $i>0$, there exists $(Y_i,y_i,G_i)\in\Omega(\tilde M_i,\pi_1(M_i,\tilde p_i))$ such that, for any length space $X'$, and any integer $s\in[0,n]$,
	
	\begin{align}\label{GapToGoodLimit}
	d_{GH}((Y_i,y_i,G_i(y_i)),(\R^{s}\times X',(0^s,x'),\R^{s}\times\{x'\}))> \epsilon.
	\end{align}
	Also by a scaling, we may assume $R=1$ in Definition \ref{DefAlmostk-Polar}.

Passing to a subsequence, we have the following commutative diagram of equivariant Gromov-Hausdorff convergence,
	\begin{equation}\label{Diag2}
		\xymatrix{
			(Y_i,y_i,G_i) \ar[rr]^{GH}\ar[d]_{}&&(\R^k\times \bar X,(0^k,\bar x),G) \ar[d]^{} \\
			(Y_i/G_i,x_i)\ar[rr]^{GH}&  & (\R^{t}\times X,(0^{t},x)),}
	\end{equation}
	where $X$ and $\bar X$ contain no line, and by the condition of $(\delta,\eta,k)$-polar at infinity, any isometry of $\bar X$ fixes $\bar x$. Recall that we have assumed each $\pi_1(M_i)$ is nilpotent with nilpotency step at most $n$. So $G$ is nilpotent. Note that by Lemma \ref{StablityLemma}, any tangent cone of $X$ at $x$ splits no $\R$-factor (The proof is similar to the penultimate paragraph of the proof of Theorem \ref{MisalsmotkPolar}). Applying Lemma \ref{PolarnessLemma} for $(\R^{k-t}\times \bar X,(0^{k-t},\bar x),G)$, it yields that $G((0^k,\bar x))$ is isometric to some $\R^{s'}\subset\R^k\times\bar X$ for some integer $s'\in[0,k-t]$. This is a contradiction.

\end{proof}

\subsection{Proof of Corollary \ref{Byproduct}}\label{SubsectionByproduct}
By Lemma \ref{Nonconnectness}, \cite[Theorem A,Proposition 4.2]{Pan22} as mentioned in Subsection \ref{virabelian}, it suffices to prove the following lemma.
\begin{lemma}
	There exists an integer $s\in[0,n]$ such that for any $(Y,y,G)\in\Omega(\tilde M,\pi_1(M))$, $(Y,y,G(y))$ is isometric to $(\R^s\times X,(0^s,x),\R^s\times\{x\})$ for some $X$.
\end{lemma}
\begin{proof}
	 Since $\tilde M$ has Euclidean volume growth, by Cheeger-Colding's theory \cite{CC96}, $(Y,y)$ is isometric to $(\R^k\times \bar X,(0^k,\bar x))$, where $\bar X$ is a metric cone with cone vertex $\bar x$, which splits no $\R$-factor. Let $r_i\to\infty$ be the sequence such that the following diagram holds, 
	 
	 \begin{equation}\label{Diag2}
	 	\xymatrix{
	 		(r_i^{-1}\tilde M,\tilde p,\pi_1(M)) \ar[rr]^{GH}\ar[d]_{}&&(\R^k\times \bar X,(0^k,\bar x),G)\ar[d]^{} \\
	 		(r_i^{-1}M,p)\ar[rr]^{GH}&  & (\underline{X},\underline{x}).} 
	 \end{equation}
	By the condition that $\limsup_{r\to\infty}\diam (\partial B_r(p))<2r$, any tangent cone of $\underline X$ at $\underline x$ splits no $\R$-factor. Since $\pi_1(M)$ is nilpotent, so is $G$. Now applying Lemma \ref{PolarnessLemma} to $(Y,y,G)=(\R^k\times \bar X,(0^k,\bar x),G)$ yields that $(Y,y,G)$ is isometric to $(\R^s\times X,(0^s,x),\R^s\times\{x\})$ for some $s$ and $X$. 
	
	Now the fact that $s$ does not depend on the choice of $(Y,y,G)$ is implied by Lemma \ref{Connectnessofinfinitycone}.
\end{proof}

\subsection{Proof of Lemma \ref{LocalConctraction}}\label{Subsectioncontracttoinfranil}

	By Abresch's result(see \cite[Theorem 1.12]{CFG}), for any $t>0$, one can smooth the original metric, $g$, of $B_7(p)$ to a new metric $\tilde g$, sastisfying that,
	\begin{enumerate}
		\item $e^{-t}g\le \tilde g\le e^{t}g$,
		\item $\|\grad-\tilde \grad \|\le t $,
		\item $\|\tilde \grad^i\mathrm{Rm}_{\tilde g}\|\le C_i(n,t)$,
	\end{enumerate}
	Moreover, at $x\in B_7(p)$, the value of $\tilde g$ depends only on $g|B_{\frac14}(x)$. Based on property (1), by fixing a small constant $t$ that depends on $\epsilon$, we can assume, without loss of generality, that $g$ itself satisfies the aforementioned estimation of higher-order derivatives of the curvature tensor (3).
	
	Now we argue by contradiction, assuming there exists an $\epsilon>0$, and a sequence of $n$-manifolds $(M_i,p_i)$ satisfying that $\| \grad^j\mathrm{Rm}_{M_i}\|\le C_j(n,t)$, $\overline{B_{8}(p_i)}$ is compact, $\partial B_8(p_i)$ is not empty, and $\vol(B_1(p_i))\to0$, while for each $i$, there are no $r>0$, $F,U\subset M_i$ meeting the requirement.
	
	By Fukaya's singular fibration theorem (\cite{Fu88}), up to a subsequence, there exists a commutative diagram,
	\begin{equation}\label{DiagSF}
		\xymatrix{
			(F(B_4(p_i)),\hat p_i,O(n)) \ar[rr]^{h_i}\ar[d]_{\pi_i}&&(Y,y,O(n))\ar[d]^{\pi} \\
			(B_4(p_i),p_i)\ar[rr]^{f_i}&  & (X,p),} 
	\end{equation}
	where $F(B_4(p_i))$ is the orthogonal frame bundle over $B_4(p)$ equipping the canonical induced metric, satisfying that 
	\begin{enumerate}
		\item $\| \grad^j\mathrm{Rm}_{F(B_4(p_i))}\|\le \bar C_j(n,t)$,
		\item $Y$ is a $C^{\infty}$-manifold,
		\item For some $\epsilon_i\to0$, each $h_i$ is an $\epsilon_i$-Gromov-Hausdorff approximation and $\epsilon_i$-Riemannian submersion, i.e., for any horizontal vector $\xi$, $e^{-\epsilon_i}|\xi|\le|\mathrm{d} h_i(\xi)|\le e^{\epsilon_i}|\xi|$, 
		\item $\|\grad ^jh_i\|\le C_j(n,t,Y)$,
		\item The restricting map $h_i:h_i^{-1}(B_{1}(y))\to B_{1}(y)$ is a smooth fiber bundle map with fiber type nilmanifolds and the norm of the second fundamental form of each $h_i$-fiber, $\|II\|\le C(n,t,Y)$,
		\item Each $f_i$ is a continuous $\epsilon_i$-Gromov-Hausdorff approximation with fiber type infra-nilmanifolds.
	\end{enumerate}

	Note that, by (2) combined with the assumption that $\partial B_8(p_i)$ is non-empty, the orbit $O(n)(y)$ is an embedding submanifold with a normal injectivity radius $>0$. Then, the properties (1-5) imply that, for all $i$, the normal injectivity radius of $h_i^{-1}(O(n)(y))$ is uniformly bounded from below by some $r\in(0,1)$, depending on $Y$ and the embedding of $O(n)(y)$ (as the proof of \cite[Proposition A2.2]{CFG} suggests). Since $\pi_i$ is a Riemannian submersion, the normal injectivity radius of $f_i^{-1}(p)=\pi_i(h_i^{-1}(O(n)(y)))$ is also uniformly bounded from below by $r$. Therefore, by (6), we can choose $F:=f_i^{-1}(p)$ and $U:=\{x\in M_i|d(x,F)<r\}$ to satisfy the required conditions, which contradicts the contradictory assumption.
	
\vspace*{20pt}

\noindent\textbf{Acknowledgments.}

The author would like to express gratitude to Prof. Jiayin Pan for his appreciation of this work and for pointing out the virtually abelianness of $\pi_1(M)$ under the assumption of Theorem \ref{MainThm}. The author would also like to thank Prof. Xian-Tao Huang and Dr. Yu Peng for the helpful comments.

\bibliographystyle{alpha}
\bibliography{ref}

\end{document}